\newtheorem{thm}{Theorem}[section]
\newtheorem{cor}[thm]{Corollary}
\newtheorem{lem}[thm]{Lemma}
\newtheorem{prop}[thm]{Proposition}
\newtheorem{question}[thm]{Question}
\theoremstyle{defin}
\newtheorem{defin}[thm]{Definition}
\newtheorem{rem}[thm]{Remark}
\newtheorem{exa}[thm]{Example}
\numberwithin{equation}{section}
\begin{document}


\baselineskip=17pt



\title[On Baire-one mappings]{On Baire-one mappings \\with zero-dimensional domains}

\author[O. Karlova]{Olena Karlova}
\address{Department of Mathematical Analysis\\ Faculty of Mathematics and Informatics\\ Chernivtsi National University\\ Chernivtsi, Ukraine}
\email{maslenizza.ua@gmail.com}

\date{}

\begin{abstract}
We generalize the Lebesgue-Hausdorff Theorem on Baire classification of mappings defined on strongly zero-dimensional spaces.
\end{abstract}

\subjclass[2010]{Primary 26A21, 54C50; Secondary 54H05}

\keywords{Baire-one mapping, $\sigma$-discrete mapping, strongly zero-dimensional space}

\maketitle

\section{Introduction}

A subset $A$ of a topological space $X$ is  {\it functionally $F_\sigma$- ($G_\delta$)-set} if  $A$ is a union (an intersection) of a sequence of zero (cozero) subsets of $X$. If a set is functionally  $F_\sigma$ and functionally $G_\delta$ simultaneously, then it is called {\it functionally ambiguous}.

Let $X$ and $Y$ be topological spaces and $f:X\to Y$ be a mapping. We say that $f$ belongs to
\begin{itemize}
\item  {\it the first Baire class}, $f\in {\rm B}_1(X,Y)$, if $f$ is a pointwise limit of a sequence of continuous mappings between  $X$ and $Y$;

   \item {\it the first (functional) Lebesgue class}, $f\in {\rm H}_1(X,Y)$ ($f\in {\rm K}_1(X,Y)$), if $f^{-1}(V)$ is (functionally) $F_\sigma$-set in $X$ for any open subset $V$ of  $Y$.
\end{itemize}

Obviously, ${\rm H}_1(X,Y)={\rm K}_1(X,Y)$ for a perfectly normal space $X$ and a topological space $Y$. It is not hard to verify that the inclusion ${\rm B}_1(X,Y)\subseteq {\rm H}_1(X,Y)$ holds for any topological space  $X$ and a perfectly normal space $Y$ (see~\cite[p.~386]{Kuratowski:Top:1}).
But the proof of the inverse inclusion is much more difficult problem that begins in the PhD thesis of Ren\'{e} Baire~\cite{Baire1}.

The classical Lebesgue-Hausdorff theorem~\cite{Lebesgue:1905, Hausdorff:1957} tells that
\begin{equation}\label{eq:B1=H1}
{\rm B}_1(X,Y)={\rm H}_1(X,Y)
\end{equation}
if $X$ is  a metric space and $Y=[0,1]^{\omega}$, or if $X$ is a zero-dimensional metrizable separable space and $Y$ is a metrizable separable space (see~\cite[Theorem 24.10]{Kechris}). This result was generalized by many mathematicians in several ways. The first direction concerns the verification of~(\ref{eq:B1=H1}) for a connected-like space $Y$. So, the equality~(\ref{eq:B1=H1}) holds under the following assumptions:
\begin{enumerate}
  \item[(I)] $X$ is a metrizable space, $Y$ is a separable convex subset of a Banach space (S.~Rolewicz \cite{Rolewicz});

  \item[(II)] $X$ is normal, $Y=\mathbb R$ (M.~Laczkovich~\cite{Laczk} without proof);

  \item[(III)] $X$ is a complete metric space, $Y$ is a Banach space (C.~Stegall~\cite{Stegall}).
\end{enumerate}
Moreover, it was proved that if
\begin{enumerate}
  \item[(IV)] $X$ is a topological space and $Y$ is a metrizable separable arcwise connected and locally arcwise connected space (O.~Karlova, V.~Mykhaylyuk~\cite{Karlova-Mykhaylyuk:2006}),
\end{enumerate}
then
\begin{equation}\label{eq:B1=K1}
{\rm B}_1(X,Y)={\rm K}_1(X,Y).
\end{equation}

R.~Hansell in~\cite{Hansell:1971} (see also~\cite{Hansell:1974}) introduced the notion of $\sigma$-discrete mapping as a convenient tool for the investigation of Borel measurable mappings with valued in non-separable metric spaces. A mapping $f:X\to Y$ is called $\sigma$-discrete if there exists a family $\mathcal B=\bigcup\limits_{n=1}^\infty \mathcal B_n$ of subsets of a space $X$ such that every family $\mathcal B_n$ is discrete in $X$ and the preimage $f^{-1}(V)$ of any open set $V$ in $Y$ is a union of sets from $\mathcal B$. The class of all $\sigma$-discrete mappings between $X$ and $Y$ is denoted by $\Sigma(X,Y)$. It is easy to see that if $Y$ is metrizable separable then every mapping $f:X\to Y$ is $\sigma$-discrete. The equality
\begin{equation}\label{eq:B1=H1andSigma}
  {\rm B}_1(X,Y)={\rm H}_1(X,Y)\cap\Sigma(X,Y)
\end{equation}
holds in the following situations:
\begin{enumerate}
  \item[(V)] $X$ is metrizable, $Y$ is a convex subset of a normed space (R.~Hansell \cite{Hansell:1974});

  \item[(VI)] $X$ is collectionwise normal, $Y$ is a closed convex subset of a Banach space (R.~Hansell~\cite{Hansell:1992});

  \item[(VII)] $X$ is a metrizable space, $Y$ is a metrizable space, every continuous function from a closed subset of $X$ to $Y$ can be extended continuously on  $X$, and for each  $y\in Y$ and each neighborhood $V$ of  $y$ in $Y$ there exists a neighborhood  $W$ of $y$ such that each continuous function from a closed subset $F\subseteq X$ to $V$ admits an extension $f:X\to V$ (C.A.~Rogers~\cite{Rogers});

\item[(VIII)] $X$ is a perfectly normal paracompact space, $Y$ is a Banach space (J.E.~Jayne, J.~Orihuela, A.J.~Pallar\'{e}s, G.~Vera~\cite{JOPV});

\item[(IX)] $X$ is metrizable, $Y$ is metrizable arcwise connected and locally arcwise connected  (M.~Fosgerau~\cite{Fos}).
\end{enumerate}

L.~Vesel\'{y} in~\cite{Vesely} noticed that every Baire-one mapping $f$ between a topological space $X$ and a metrizable space $Y$ is  ''strongly $\sigma$-discrete'', i.e., there exists a family $\mathcal B=\bigcup\limits_{n=1}^\infty\mathcal B_n$ of subsets of $X$ such that for every family  $\mathcal B_n=(B_i:i\in I_n)$ there exists a discrete family $(U_i:i\in I_n)$ of open sets in $X$ with $\overline{B_i}\subseteq U_i$ for all  $i\in I_n$ and, moreover, the preimage $f^{-1}(V)$ of any open set $V$ in $Y$ is a union of sets from $\mathcal B$. The collection of all such mappings Vesel\'{y} denoted by $\Sigma^*(X,Y)$ and proved the equality
\begin{equation}\label{eq:B1=H1andSigmaStar}
{\rm B}_1(X,Y)={\rm H}_1(X,Y)\cap\Sigma^*(X,Y),
\end{equation}
in particular, in the case when
\begin{enumerate}
\item[(X)] $X$ is a normal space, $Y$ is a metrizable arcwise connected and locally arcwise connected space (L. Vesel\'{y}~\cite{Vesely}).
\end{enumerate}

The second way of the development of the Lebesgue-Hausdorff theorem deals with the case when $Y$ does not satisfy any properties like connectedness, but $X$ is zero-dimensional, strongly zero-dimensional, etc. In this direction the following results were obtained: the equality~(\ref{eq:B1=H1}) holds if
\begin{enumerate}
  \item[(XI)] $X$ is a normal strongly zero-dimensional space, $Y$ is a zero-dimensional metrizable separable space (H.~Shatery, J.~Zafarani~\cite{ShZ});
\end{enumerate}
the equality~(\ref{eq:B1=K1}) takes place when
\begin{enumerate}
  \item[(XII)] $X$ is a strongly zero-dimensional space, $Y$ is a metrizable separable space (O.~Karlova~\cite{Karlova:UMV:2007});
\end{enumerate}
and the equality~(\ref{eq:B1=H1andSigma}) is valid if
\begin{enumerate}
  \item[(XIII)] $X$ is a strongly zero-dimensional metrizable space and $Y$ is a metrizable space (O.~Karlova~\cite{Karlova:Visnyk:2008}).
\end{enumerate}

Finally, the third direction is connected with the case when $Y$ is non-metrizable. Here we are able to prove the equality~(\ref{eq:B1=H1}) if
\begin{enumerate}
  \item[(XIV)] $X$ is a hereditarily Baire separable metrizable space, $Y$ is a strict inductive limit of a sequence of metrizable
locally convex spaces (O.~Karlova, V.~Mykhaylyuk~\cite{Karlova-Mykhaylyuk:2006:MS}).
\end{enumerate}
However, it remains here many unsolved problems, in particular, the following.
\begin{question}\label{q:OpenProblems:1}\cite[Question 3.3, p.~659]{OpenProblems2}
  Does every ${\rm H}_1$-mapping $f:[0,1]\to C_p[0,1]$ belong to the first Baire class?
\end{question}
This question is equivalent to the following one.
\begin{question}\label{q:OpenProblems:2}\cite[Question 3.4, p.~659]{OpenProblems2}
 Let $f:[0,1]\times [0,1]\to \mathbb R$  be a function which is continuous with respect to the first variable and belongs to the first Baire class with respect to the second one. Is $f$ a pointwise limit of a sequence of separately continuous functions?
\end{question}

In this paper we develop technics from \cite{Fos} and generalize the Lebesgue-Hausdorff Theorem for $\sigma$-discrete mappings defined on strongly zero-dimen\-sional spaces with valued in metrizable spaces. In order to do this we consider the class of $\sigma$-strongly functionally discrete mappings introduced in \cite{Karlova:TA:2015}. We denote this class by $\Sigma^f(X,Y)$ and notice that $\Sigma^f(X,Y)=\Sigma^*(X,Y)$ if $X$ is a normal space. We prove that 
${\rm K}_1(X,Y)\cap\Sigma^f(X,Y)={\rm B}_1(X,Y)$, if $X$ is a strongly zero-dimensional space and $Y$ is a metrizable space.  We also introduce almost strongly zero-dimensional spaces and prove that
if $X$ is a topological space and $Y$ is a disconnected metrizable separable space, then the following conditions are equivalent: (i) $X$ is almost strongly zero-dimensional; (ii) ${\rm K}_1(X,Y)={\rm B}_1(X,Y)$.

\section{Relations between functionally $\sigma$-discrete and ${\rm B}_1$-mappings}

\begin{defin}
 {\rm A family $\mathcal A=(A_i:i\in I)$ of subsets of a topological space $X$ is said to be
  \begin{enumerate}
    \item {\it discrete} if every point of $X$ has a neighborhood which intersects at most one set from the family $\mathcal A$;

    \item {\it strongly discrete} if there exists a discrete family $(U_i:i\in I)$ of open subsets of $X$ such that $\overline{A_i}\subseteq U_i$ for every $i\in I$;

    \item {\it strongly functionally discrete} or, briefly, {\it sfd-family} if there exists a discrete family $(U_i:i\in I)$ of cozero subsets of $X$ such that $\overline{A_i}\subseteq U_i$ for every $i\in I$.   
  \end{enumerate}}
\end{defin}

\begin{rem}\label{remark:discr_families}
$\phantom{a}$
{\rm  \begin{enumerate}
    \item For an arbitrary space  $X$ we have  (3) $\Rightarrow$ (2) $\Rightarrow$ (1);

    \item $X$ is collectonwise normal if and only if  (1)=(2);

    \item if $X$ is normal, then (2)=(3).\label{rem:normal}
    \end{enumerate}}
\end{rem}

\begin{defin}
{\rm Let $\mathcal P$ be a property of a family of sets. A family $\mathcal A$ is called {\it a $\sigma$-$\mathcal P$ family} if  $\mathcal A=\bigcup\limits_{n=1}^\infty {\mathcal A}_n$, where every family $\mathcal A_n$ has the property $\mathcal P$.}
\end{defin}

\begin{defin}\label{def:base-for-a-function}
{\rm A family $\mathcal B$ of sets of a topological space $X$ is called {\it a base}  for a mapping $f:X\to Y$ if the preimage $f^{-1}(V)$ of an arbitrary open set  $V$ in $Y$ is a union of sets from $\mathcal B$.}
\end{defin}

Clearly, we may assume that $V$ is an element of an open base of $Y$ in Definition~\ref{def:base-for-a-function}.

\begin{defin}
{\rm  If a mapping $f:X\to Y$ has a base which is a $\sigma$-$\mathcal P$ family, then we say that $f$ is {\it a $\sigma$-$\mathcal P$ mapping}.}
\end{defin}

The collection of all $\sigma$-$\mathcal P$ mappings between $X$ and $Y$ we will denote by
\begin{itemize}
  \item $\Sigma(X,Y)$ if $\mathcal P$ is a property of discreteness;

  \item $\Sigma^*(X,Y)$ if $\mathcal P$ is a property of a strong discreteness;

  \item $\Sigma^f(X,Y)$ if $\mathcal P$ is a property of a strong functional discreteness;
  
   \item  $\Sigma^{f}_0(X,Y)$ if $f$ has a $\sigma$-sfd base of zero sets.
\end{itemize}

Let us observe that a continuous mapping $f:X\to Y$ is $\sigma$-discrete if either $X$ or $Y$ is a metrizable space, since every metrizable space has a $\sigma$-discrete base of open sets. Moreover, it is evident that every mapping with values in a second countable space is $\sigma$-discrete. In~\cite{Hansell:1971} Hansell proved that every Borel measurable mapping  $f:X\to Y$ between  a complete metric space $X$ and  a metric space $Y$ is $\sigma$-discrete. For any metric spaces $X$ and $Y$ the family $\Sigma(X,Y)$ is closed under pointwise limits~\cite{Hansell:1987}, which implies that every Baire measurable mapping between metric spaces is $\sigma$-discrete.

The following fact implies from \cite[Theorem 6]{Karlova:TA:2015}. 
\begin{thm}\label{cor:sigma_F}
Let $X$ be a topological space and $Y$ be a metrizable space. Then
${\rm K}_1(X,Y)\cap\Sigma^f(X,Y)=\Sigma^f_0(X,Y)$. 
\end{thm}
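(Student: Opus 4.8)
The plan is to prove both inclusions of the identity ${\rm K}_1(X,Y)\cap\Sigma^f(X,Y)=\Sigma^f_0(X,Y)$, with the bulk of the work going into showing ${\rm K}_1(X,Y)\cap\Sigma^f(X,Y)\subseteq\Sigma^f_0(X,Y)$, since the reverse inclusion is essentially bookkeeping. For the easy direction: if $f\in\Sigma^f_0(X,Y)$, then $f$ has a $\sigma$-sfd base $\mathcal B=\bigcup_n\mathcal B_n$ consisting of zero sets, with the $\mathcal B_n$ strongly functionally discrete. Then automatically $\mathcal B$ witnesses $f\in\Sigma^f(X,Y)$. Moreover, for any open $V\subseteq Y$ one has $f^{-1}(V)=\bigcup\{B\in\mathcal B:B\subseteq f^{-1}(V)\}$, which is a countable union of zero sets, hence functionally $F_\sigma$; so $f\in{\rm K}_1(X,Y)$. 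This uses only the definitions.

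For the hard direction, suppose $f\in{\rm K}_1(X,Y)\cap\Sigma^f(X,Y)$. Fix a $\sigma$-sfd base $\mathcal B=\bigcup_n\mathcal B_n$ for $f$ (with $\mathcal B_n=(B_i:i\in I_n)$ and associated discrete families $(U_i:i\in I_n)$ of cozero sets, $\overline{B_i}\subseteq U_i$). The goal is to replace each $B_i$ by a zero set $Z_i$ in such a way that the $Z_i$ still form a $\sigma$-sfd base for $f$. The natural approach is: since $f\in{\rm K}_1(X,Y)$, preimages of open (equivalently basic open) sets of $Y$ are functionally $F_\sigma$, so each such preimage is a countable union of zero sets; intersecting a member $B_i$ of the old base with the zero sets coming from these functional $F_\sigma$-representations, and using that $B_i\subseteq f^{-1}(V_i)$ for a suitable basic open $V_i$ (one can take $\mathcal B$ organized so that each $B_i$ is assigned to a basic open set it is contained in), one writes $B_i=\bigcup_m(B_i\cap Z_{i,m})$. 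The sets $B_i\cap Z_{i,m}$ need to be zero sets — which requires $B_i$ itself to be a cozero/zero set, so in fact one should first arrange that the original sfd base consists of cozero sets (a member of an sfd family contained in a cozero set $U_i$ can be shrunk: replace $B_i$ by a cozero set $C_i$ with $\overline{B_i}\subseteq C_i\subseteq\overline{C_i}\subseteq U_i$, which does not spoil the base property since $f^{-1}(V)$ was a union of $B_i$'s). Then $B_i\cap Z_{i,m}$ is an intersection of a cozero and a zero set; to make it a genuine zero set one uses that in any topological space the family of zero sets is closed under countable intersection and finite union, and a cozero set is a countable union of zero sets, so $B_i\cap Z_{i,m}$ is again functionally $F_\sigma$ — iterate or reindex to get an honest $\sigma$ family of zero sets. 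Each new family, being a subfamily of the old strongly functionally discrete $\mathcal B_n$ (after further splitting into countably many pieces indexed by $m$), is again strongly functionally discrete via the same $U_i$. Reindexing the countable union of countable families as a single sequence yields a $\sigma$-sfd base of zero sets, i.e., $f\in\Sigma^f_0(X,Y)$.

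The main obstacle I anticipate is verifying carefully that after all the shrinking and splitting the new family remains a \emph{base} for $f$ — i.e., that every $f^{-1}(V)$ is still exactly a union of new base members, not merely a subset of one. This forces the base to be indexed compatibly with a fixed countable open base $\{V_k\}$ of $Y$: one wants, for each $k$, $f^{-1}(V_k)$ to be a union of base members each contained in $f^{-1}(V_k)$, and the shrinking operation $B_i\rightsquigarrow B_i\cap Z_{i,m}$ must be performed simultaneously and coherently for all $i$. A secondary subtlety is the passage ``cozero $\cap$ zero $=$ zero up to countable union'': one must not claim that a cozero set is a zero set, only that it decomposes, and then track that the reindexed family of zero sets still carries the strong-functional-discreteness structure (the open cozero neighborhoods $U_i$ survive unchanged). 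Since the statement says the result ``implies from \cite[Theorem 6]{Karlova:TA:2015}'', I expect the actual proof to be a short deduction citing that theorem and then checking the base-property and zero-set bookkeeping sketched above, rather than re-deriving everything from scratch.
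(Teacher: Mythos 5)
The paper does not actually prove this statement: it is quoted as a consequence of Theorem~6 of \cite{Karlova:TA:2015}, so your attempt must be judged as a reconstruction of that external argument. Your overall strategy --- intersect the members of a $\sigma$-sfd base with the zero sets coming from the functionally $F_\sigma$ decompositions of preimages of basic open sets, and decompose the surrounding cozero sets into zero sets to repair the ``cozero $\cap$ zero'' problem --- is the right one, but three steps do not go through as written. The most serious is your appeal to ``a fixed countable open base $\{V_k\}$ of $Y$'': $Y$ is only assumed metrizable, not separable, and for separable $Y$ the whole $\Sigma^f$ apparatus is beside the point. You must work with a $\sigma$-discrete open base $\mathcal V=\bigcup_m\mathcal V_m$ and then verify that the refined families remain discrete; the observation you are missing is that each $\mathcal V_m$, being discrete, is pairwise disjoint, so for fixed $m$ a nonempty base member $B$ satisfies $B\subseteq f^{-1}(V)$ for at most one $V\in\mathcal V_m$ --- this is exactly what lets you index the new layer by $B$ alone and inherit discreteness from the cozero family $(U_B)$.

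Second, the interpolation ``replace $B_i$ by a cozero set $C_i$ with $\overline{B_i}\subseteq C_i\subseteq\overline{C_i}\subseteq U_i$'' is a Urysohn-type separation available in normal spaces but not in an arbitrary topological space, which is the stated generality. It is also unnecessary: writing $U_i=g_i^{-1}((0,1])$ and $W_{i,l}=g_i^{-1}([1/l,1])$, the sets $W_{i,l}$ are zero sets, already closed, contained in $U_i$, and cover $U_i\supseteq B_i$; then each $W_{i,l}\cap Z_{V,k}$ is an honest zero set whose closure lies in $U_i$, and the families $(W_{B,l}\cap Z_{V,k}: B\in\mathcal B_n,\ B\subseteq f^{-1}(V),\ V\in\mathcal V_m)$ for $n,m,k,l\in\mathbb N$ give the desired $\sigma$-sfd base of zero sets. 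Third, in the inclusion $\Sigma^f_0(X,Y)\subseteq{\rm K}_1(X,Y)$ you claim that $\bigcup\{B\in\mathcal B: B\subseteq f^{-1}(V)\}$ is a countable union of zero sets; it is not, since each layer $\mathcal B_n$ may be uncountable. You need Corollary~\ref{cor:unionsfd} (the union of an sfd family of zero sets is a zero set) to collapse each discrete layer to a single zero set before taking the countable union over $n$. All three gaps are repairable, but as written the argument is only valid for normal $X$ and separable $Y$, not in the stated generality.
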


The next simple lemma will be useful.

\begin{lem}\label{union_SFD}
Let $X$ be a topological space, $(U_i:i\in I)$ be a locally finite family of cozero subsets of $X$, $(F_i:i\in I)$ be a family of zero subsets of $X$ such that $F_i\subseteq U_i$ for every $i\in I$. Then $F=\bigcup\limits_{i\in I} F_i$ is a zero set in $X$.
\end{lem}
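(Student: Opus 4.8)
The plan is to produce an explicit continuous function $g:X\to[0,1]$ (or $[0,\infty)$) whose zero set is exactly $F$, which will certify that $F$ is a zero set. First I would invoke the defining property of a cozero set: for each $i\in I$ there is a continuous $g_i:X\to[0,1]$ with $U_i=\{x: g_i(x)>0\}$, and since $F_i$ is a zero set there is a continuous $h_i:X\to[0,1]$ with $F_i=h_i^{-1}(0)$; replacing $h_i$ by $\min\{h_i,g_i\}$ if necessary, I may assume in addition that $h_i$ vanishes outside $U_i$, i.e. $\{x:h_i(x)>0\}\subseteq U_i$ (this uses $F_i\subseteq U_i$ only to keep $h_i^{-1}(0)=F_i$ unchanged — one checks $\min\{h_i,g_i\}^{-1}(0) = h_i^{-1}(0)\cup g_i^{-1}(0) = F_i\cup(X\setminus U_i)$, so a small correction is needed; more cleanly, set $h_i$ directly to have zero set $F_i\cup(X\setminus U_i)$, which is a zero set because it is the preimage of $0$ under $h_i\cdot(1-g_i)$ up to normalization). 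I will iron out the exact bookkeeping so that each $h_i$ is continuous, takes values in $[0,1]$, satisfies $h_i^{-1}((0,1])\subseteq U_i$, and $h_i^{-1}(0)\supseteq F_i$ with equality on $U_i$.

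Next I would define $h:X\to[0,1]$ by $h(x)=\sum_{i\in I} h_i(x)$. Local finiteness of $(U_i:i\in I)$ — hence of the supports of the $h_i$ — guarantees that this sum is locally a finite sum, so $h$ is well defined and continuous; after rescaling I may take it into $[0,1]$. The key computation is then $h^{-1}(0)=F$: if $x\in F$ then $x\in F_{i_0}$ for some $i_0$, and for every $i$ either $x\in U_i$ (so $x\in F_i$ by the disjointness forced by the discrete open family, giving $h_i(x)=0$) or $x\notin U_i$ (again $h_i(x)=0$), hence $h(x)=0$; conversely if $h(x)=0$ then every $h_i(x)=0$, and picking any index $i_0$ with $x\in U_{i_0}$ — which exists provided $x$ lies in some $U_i$; if $x$ lies in none of the $U_i$ then we must argue separately — forces $x\in F_{i_0}\subseteq F$. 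The case $x\notin\bigcup_i U_i$ needs attention: such an $x$ has $h(x)=0$ but need not lie in $F$, so the naive sum does not have zero set exactly $F$. I will fix this by instead using $h_i$ with zero set $F_i\cup (X\setminus U_i)$ and replacing the sum by something like $\bigwedge$ over a suitable reindexing, or — the clean route — taking $h=\sum_i h_i$ where now $h_i^{-1}(0)=F_i\cup(X\setminus U_i)$ would force $h^{-1}(0)=\bigcap_i(F_i\cup(X\setminus U_i))$, which is not $F$ either; the correct combination is a product-type construction on the complement. The cleanest correct argument: let $g_i$ witness $U_i$ cozero and note $X\setminus U=\bigcap_i(X\setminus U_i)$; then $F=\bigcup_i F_i$ and, because the $U_i$ are pairwise disjoint (their closures lie in a discrete open family), on each $U_i$ the set $F$ coincides with $F_i$, while $F\cap(X\setminus U)=\varnothing$ is automatic only if $F_i\subseteq U_i$ — which is exactly our hypothesis. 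So define $\varphi=\sum_i \psi_i$ where $\psi_i$ is continuous, $0\le\psi_i\le 1$, $\psi_i^{-1}(0)=F_i$, and $\psi_i\equiv 1$ off $U_i$ (obtainable as $\max\{1-g_i,\ \text{a zero-set function for }F_i\text{ truncated to }U_i\}$); then $\varphi$ is a locally finite sum up to the constant tails, which I handle by instead writing $\varphi=\sum_i(\psi_i-1_{\text{supp}})$ — I will present the standard device of choosing $\psi_i$ supported in $U_i$ with $\psi_i^{-1}(0)\cap U_i=F_i$, setting $\varphi=\sum_i\psi_i$ (a genuine locally finite sum), observing $\varphi^{-1}(0)=\bigcap_i(U_i^c\cup F_i)=(X\setminus U)\cup\bigcup_i F_i$, and finally multiplying by a zero-set function for $X\setminus U$...

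At this point the honest summary of the plan is: reduce to the case where the $U_i$ are pairwise disjoint (immediate, since a discrete family is in particular disjoint), pick for each $i$ a continuous $f_i:X\to[0,1]$ with $f_i^{-1}(0)=F_i$ and $f_i$ bounded below by a positive continuous function on $X\setminus U_i$ (possible because $F_i\subseteq U_i$, so $F_i$ and $X\setminus U_i$ are disjoint zero sets and one uses a function separating them combined with $g_i$), define $f=\inf_i f_i$ pointwise understood as: on $U_{i_0}$ set $f=f_{i_0}$ and on $X\setminus\bigcup_i U_i$ set $f$ equal to a fixed continuous function $f_\infty:X\to(0,1]$ with no zeros there — glued continuously using that $(U_i)$ is locally finite so the gluing is locally a finite min — and check $f$ is continuous with $f^{-1}(0)=\bigcup_i F_i=F$. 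The main obstacle, and the part I expect to spend the most care on, is exactly this gluing across the "boundary" $X\setminus\bigcup_i U_i$: ensuring that the locally defined pieces agree on overlaps and that continuity is not lost at points in the closure of $\bigcup_i U_i$ that lie outside every $U_i$; local finiteness of the cozero family $(U_i)$ is precisely what makes this work, since near any such point all but finitely many $f_i$ are irrelevant and the relevant ones are $\ge$ a positive continuous function there, so $f$ stays bounded away from $0$ exactly off $F$. Everything else — continuity of each $f_i$, the value bounds, the final identification of the zero set — is routine.
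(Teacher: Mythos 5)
Your final plan is close in spirit to the paper's argument, but as written it has two genuine gaps. First, you repeatedly reduce to the case where the $U_i$ are pairwise disjoint, justifying this by calling the family discrete; the hypothesis is only that $(U_i:i\in I)$ is \emph{locally finite}, and a locally finite family of cozero sets need not be disjoint (think of a locally finite open cover). Your piecewise definition ``on $U_{i_0}$ set $f=f_{i_0}$'' is therefore ill-defined, and the disjointness you invoke when computing the zero set of your candidate function is simply not available. Second, your normalization of the $f_i$ off $U_i$ is too weak: requiring only that $f_i$ be bounded below by \emph{some} positive continuous function on $X\setminus U_i$ (a bound that may depend on $i$) does not make $\inf_i f_i$ continuous, nor does it keep its zero set equal to $\bigcup_i F_i$ --- for instance, if $f_i\equiv 1/i$ on a neighborhood $W$ disjoint from all the $U_i$, then $\inf_i f_i\equiv 0$ on $W$ even though $W\cap F=\emptyset$. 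What you need is a normalization that is uniform in $i$.

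Both problems disappear with the paper's one-line construction, which is the clean version of what you are groping for: since $F_i$ and $X\setminus U_i$ are disjoint zero sets, one can choose a continuous $f_i:X\to[0,1]$ with $f_i^{-1}(0)=F_i$ and $f_i^{-1}(1)=X\setminus U_i$ (take $f_i=a_i/(a_i+b_i)$ where $a_i,b_i$ are nonnegative continuous functions witnessing the two zero sets). Then $f=\min_i f_i$ is, on a neighborhood of any point meeting only the $U_i$ with $i$ in a finite set $J$, equal to $\min_{i\in J}f_i$, because every other $f_i$ is identically $1$ there; hence $f$ is continuous, and $f(x)=0$ if and only if $f_i(x)=0$ for some $i$ in that finite set, i.e.\ if and only if $x\in F$. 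No disjointness of the $U_i$ is used, and no gluing across $X\setminus\bigcup_i U_i$ is needed.
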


\begin{proof}
For every $i\in I$ we choose a continuous function  $f_i:X\to [0,1]$ such that
$F_i=f_i^{-1}(0)$ and $X\setminus U_i=f_i^{-1}(1)$. For every $x\in X$ let $f(x)=\min\limits_{i\in I} f_i(x)$. Then $f:X\to [0,1]$ is continuous and $F=f^{-1}(0)$.
\end{proof}

\begin{cor}\label{cor:unionsfd}
 A union of an sfd-family of zero sets in a topological space is a zero set.
\end{cor}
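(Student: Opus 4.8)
The plan is to deduce the statement directly from Lemma~\ref{union_SFD}, for which the combinatorial work has already been done. Let $\mathcal A=(A_i:i\in I)$ be an sfd-family of zero sets in a topological space $X$. By the definition of a strongly functionally discrete family there is a discrete family $(U_i:i\in I)$ of cozero subsets of $X$ with $\overline{A_i}\subseteq U_i$ for every $i\in I$. Every discrete family is locally finite, so $(U_i:i\in I)$ is a locally finite family of cozero sets. Moreover, each $A_i$, being a zero set, is closed, whence $A_i=\overline{A_i}\subseteq U_i$. Thus the hypotheses of Lemma~\ref{union_SFD} are met with $F_i:=A_i$, and the lemma yields that $F=\bigcup_{i\in I}A_i$ is a zero set in $X$.

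There is essentially no obstacle to overcome: the substantive step — exhibiting a single continuous function $f:X\to[0,1]$ with $f^{-1}(0)=\bigcup_i F_i$, obtained as the pointwise minimum of functions $f_i$ with $F_i=f_i^{-1}(0)$ and $X\setminus U_i=f_i^{-1}(1)$, its continuity being guaranteed by local finiteness of $(U_i)$ — is exactly the content of the proof of Lemma~\ref{union_SFD}. All that the corollary adds is the two trivial observations that a discrete family is locally finite and that a zero set equals its closure, so that an sfd-family of zero sets is a special case of the configuration treated in the lemma.
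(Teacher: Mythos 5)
Your proposal is correct and is exactly the derivation the paper intends: the corollary is stated as an immediate consequence of Lemma~\ref{union_SFD}, obtained by noting that a discrete family is locally finite and that a zero set is closed, hence contained in the corresponding cozero set $U_i$. No further comment is needed.
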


We say that a topological space $X$ {\it is strongly zero-dimensional} if for any completely separated subsets $A$ and $B$ of $X$ there exists a clopen  set  $U$ such that $A\subseteq U\subseteq X\setminus B$.

For families $\mathcal A$ and $\mathcal B$ we write $\mathcal A \prec \mathcal B$ if for every $A\in\mathcal A$ there exists $B\in\mathcal B$ such that $A\subseteq B$.

\begin{prop}\label{prop:main_technical}
Let $X$ be a  strongly zero-dimensional space, $(Y,d)$ be a metric space, \mbox{$f:X\to Y$} be a mapping,
 ${\mathcal F}_1,\dots,{\mathcal F}_n$ be families of zero subsets of $X$ such that
\begin{enumerate}
\item $\mathcal F_k$ is an sfd-family for every $k=1,\dots,n$;

\item  ${\mathcal F}_{k+1}\prec {\mathcal F}_k$ for every $k=1,\dots,n-1$;

\item  for every  $k=1,\dots,n$ the inequality ${\rm diam}(f(F))<\frac{1}{2^{k+2}}$ holds for all $F\in {\mathcal F}_k$.
\end{enumerate}
Then there exists a continuous mapping $g:X\to Y$ such that the inclusion $x\in \cup {\mathcal F}_k$ for some   $k=1,\dots,n$ implies the inequality
\begin{gather}\label{prop:main_technical_ineq}
d(f(x),g(x))<\frac{1}{2^k}.
\end{gather}
\end{prop}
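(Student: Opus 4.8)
The plan is to construct, by recursion on the level $k$, a family of clopen ``tubes'' surrounding the sets of $\mathcal F_k$ which is nested compatibly with $\prec$, and then to take $g$ locally constant, equal on each tube to a sampled value of $f$. We may assume every member of every $\mathcal F_k$ is nonempty (deleting empty sets changes neither hypotheses (1)--(3) nor $\cup\mathcal F_k$) and that $Y\neq\emptyset$ (otherwise $X=\emptyset$ and there is nothing to prove). Write $\mathcal F_k=(F^k_i:i\in I_k)$; by (1) fix a discrete family $(U^k_i:i\in I_k)$ of cozero sets with $\overline{F^k_i}\subseteq U^k_i$, and by (2) fix, for $k=2,\dots,n$, a map $\rho_k\colon I_k\to I_{k-1}$ with $F^k_i\subseteq F^{k-1}_{\rho_k(i)}$ for all $i\in I_k$.

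First I would build clopen sets $G^k_i$ ($k=1,\dots,n$, $i\in I_k$), by recursion on $k$, so that $F^k_i\subseteq G^k_i\subseteq U^k_i$ and, for $k\ge2$, also $G^k_i\subseteq G^{k-1}_{\rho_k(i)}$. For $k=1$ the sets $F^1_i$ and $X\setminus U^1_i$ are disjoint zero sets, hence completely separated, so strong zero-dimensionality gives a clopen $G^1_i$ with $F^1_i\subseteq G^1_i\subseteq U^1_i$. For $k\ge2$ put $W=U^k_i\cap G^{k-1}_{\rho_k(i)}$; its complement $X\setminus W=(X\setminus U^k_i)\cup(X\setminus G^{k-1}_{\rho_k(i)})$ is a union of two zero sets (the first because $U^k_i$ is cozero, the second because every clopen set is a zero set), hence a zero set, and it is disjoint from the zero set $F^k_i$ because $F^k_i\subseteq U^k_i$ and $F^k_i\subseteq F^{k-1}_{\rho_k(i)}\subseteq G^{k-1}_{\rho_k(i)}$; strong zero-dimensionality then supplies a clopen $G^k_i$ with $F^k_i\subseteq G^k_i\subseteq W$. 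Since $G^k_i\subseteq U^k_i$, each family $(G^k_i:i\in I_k)$ is discrete, in particular pairwise disjoint, and $\bigcup_{i\in I_k}G^k_i$ is clopen.

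Next, pick $y^k_i\in f(F^k_i)$ for every $k,i$, fix $y_0\in Y$, and define $g\colon X\to Y$ by $g(x)=y^k_i$ when $k$ is the largest level with $x\in\bigcup_{j\in I_k}G^k_j$ and $i$ is the unique index with $x\in G^k_i$, and $g(x)=y_0$ if $x$ lies in no tube. Continuity follows because every $D_m:=\bigcup_{k>m}\bigcup_{j\in I_k}G^k_j$ is clopen (a finite union of clopen sets): if $x\in G^k_i$ with $k$ maximal for $x$, then $G^k_i\setminus D_k$ is a clopen neighbourhood of $x$ on which $g\equiv y^k_i$, and if $x$ lies in no tube then $X\setminus D_0$ is a clopen neighbourhood of $x$ on which $g\equiv y_0$; thus $g$ is locally constant, hence continuous.

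Finally, for $x\in\cup\mathcal F_k$, say $x\in F^k_i$, we have $x\in G^k_i$, so the maximal level $k^{\ast}$ of $x$ satisfies $k^{\ast}\ge k$; writing $x\in G^{k^{\ast}}_{i^{\ast}}$ gives $g(x)=y^{k^{\ast}}_{i^{\ast}}\in f(F^{k^{\ast}}_{i^{\ast}})$. Put $i'=\rho_{k+1}(\rho_{k+2}(\cdots\rho_{k^{\ast}}(i^{\ast})\cdots))\in I_k$ (so $i'=i^{\ast}$ when $k^{\ast}=k$); then $F^{k^{\ast}}_{i^{\ast}}\subseteq F^k_{i'}$ and, by the nesting, $G^{k^{\ast}}_{i^{\ast}}\subseteq G^k_{i'}$. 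As $x\in G^k_i\cap G^k_{i'}$ and the level-$k$ tubes are pairwise disjoint, $i'=i$, hence $F^{k^{\ast}}_{i^{\ast}}\subseteq F^k_i$; so $f(x)$ and $g(x)$ both lie in $f(F^k_i)$, and (3) yields $d(f(x),g(x))\le{\rm diam}\,f(F^k_i)<\frac{1}{2^{k+2}}<\frac{1}{2^{k}}$, which is (\ref{prop:main_technical_ineq}). I expect the main obstacle to be the second step: one must keep each tube $G^k_i$ inside the parent tube $G^{k-1}_{\rho_k(i)}$ — this is exactly what lets the level-$k$ diameter estimate be pulled back from an arbitrarily finer level, via the disjointness identification $i'=i$ — while remaining able to invoke strong zero-dimensionality, which makes the (routine) verification that $X\setminus W$ is a zero set unavoidable. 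Once the tubes are in place, continuity of $g$ and the estimate are bookkeeping that relies only on $n$ being finite.
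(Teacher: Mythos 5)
Your proof is correct and follows essentially the same route as the paper's: nested clopen tubes produced by applying strong zero-dimensionality to disjoint zero sets, then a locally constant sampling map (your $g$ is literally the paper's $g_n$, unfolded). The only difference is in the final bookkeeping: you verify the estimate directly by showing $g(x)\in f(F^k_i)$ via disjointness of the level-$k$ tubes, whereas the paper telescopes $d(g_{j+1}(x),g_j(x))$; your version even gives the slightly sharper bound $d(f(x),g(x))<\frac{1}{2^{k+2}}$.
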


\begin{proof}  Let $\mathcal F_k=(F_{i,k}:i\in I_k)$, $k=1,\dots,n$. We choose a discrete family $(U_{i,1}:i\in I_1)$ of cozero sets in  $X$ such that  $F_{i,1}\subseteq U_{i,1}$ for every $i\in I_1$. Let $(V_{i,1}:i\in I_1)$ be a family of clopen sets such that  $F_{i,1}\subseteq V_{i,1}\subseteq U_{i,1}$. Now we take a discrete family $(G_{i,2}:i\in I_2)$ of cozero sets such that $F_{i,2}\subseteq G_{i,2}$  for every $i\in I_2$. Since $\mathcal F_2\prec \mathcal F_1$, for every $i\in I_2$ there exists unique $j\in I_1$ such that $F_{i,2}\subseteq F_{j,1}$. We denote $U_{i,2}=G_{i,2}\cap V_{j,1}$ and choose a clopen set $V_{i,2}$ with $F_{i,2}\subseteq V_{i,2}\subseteq U_{i,2}\subseteq V_{j,1}$. Proceeding in this way we obtain discrete families $(U_{i,k}:i\in I_k)$ and $(V_{i,k}:i\in I_k)$ of subsets of $X$ for $k=1,\dots,n$ such that
$U_{i,k}$ is a cozero set, $V_{i,k}$ is a clopen set,  for every $k=1,\dots,n-1$  and $i\in I_{k+1}$ there exists unique $j\in I_k$ such that
  \begin{equation}\label{prop:main_technical_cond1}
  F_{i,k+1}\subseteq F_{j,k},
  \end{equation}
  \begin{equation}\label{prop:main_technical_cond2}
   F_{i,k+1}\subseteq V_{i,k+1}\subseteq U_{i,k+1}\subseteq V_{j,k}.
  \end{equation}
Observe that for every $k$ the set $V_k=\bigcup\limits_{i\in I_k}V_{i,k}$ is clopen by Corollary~\ref{cor:unionsfd}.

Let $y_0\in f(X)$ and $y_{i,k}\in f(F_{i,k})$ be arbitrary points for every $k$ and $i\in I_k$. For all $x\in X$ let
$$
g_0(x)=y_0.
$$
Suppose that for some  $k$, $1\le k<n$, we have defined continuous mappings $g_1,\dots,g_k$ such that
\begin{gather}\label{c4}
g_k(x)=\left\{\begin{array}{ll}
                g_{k-1}(x), & \mbox{if\,\,\,}x\in X\setminus V_k, \\
                y_{i,k}, & \mbox{if\,\,\,}x\in V_{i,k}\mbox{\,\,for some\,\,} i\in I_k.
              \end{array}
\right.
\end{gather}
Let
$$
g_{k+1}(x)=\left\{\begin{array}{ll}
                g_{k}(x), & \mbox{if\,\,\,}x\in X\setminus V_{k+1}, \\
                y_{i,k+1}, & \mbox{if\,\,\,}x\in V_{i,k+1}\mbox{\,\,for some\,\,} i\in I_{k+1}.
              \end{array}
\right.
$$
Then the mapping $g_{k+1}:X\to Y$ is continuous, since every restriction $g_{k+1}|_{V_{k+1}}$ and $g_{k+1}|_{X\setminus V_{k+1}}$  is continuous and the set $V_{k+1}$ is clopen. Proceeding inductively we define continuous mappings $g_1,\dots,g_n$ satisfying~(\ref{c4}).

We put $g=g_n$ and prove that $g$ satisfies~(\ref{prop:main_technical_ineq}). We first show that
\begin{gather}\label{c3}
  d(g_{k+1}(x),g_k(x))<\frac{1}{2^k}
\end{gather}
for all $0\le k<n$ and $x\in X$. Indeed, if $x\in X\setminus V_{k+1}$, then $g_{k+1}(x)=g_k(x)$ and $ d(g_{k+1}(x),g_k(x))=0$. Assume $x\in V_{i,k+1}$ for some $i\in I_{k+1}$. Take $j\in I_k$ such that conditions~(\ref{prop:main_technical_cond1}) and~(\ref{prop:main_technical_cond2}) hold. Then $g_{k+1}(x)=y_{i,k+1}$ and $g_k(x)=y_{j,k}$. Since $f(F_{i,k+1})\subseteq f(F_{j,k})$,  $y_{i,k+1}\in f(F_{j,k})$. Hence, $d(g_{k+1}(x), g_k(x))\le {\rm diam}(f(F_{j,k}))<\frac{1}{2^{k+2}}$.

Let $1\le k\le n$ and $x\in \cup\mathcal F_k$. Then $x\in F_{i,k}\subseteq V_{i,k}$ for some $i\in I_k$. It follows that $g_k(x)=y_{i,k}$ and $d(f(x),g_k(x))\le {\rm diam}(f(F_{i,k}))<\frac{1}{2^{k+2}}$. Taking into account~(\ref{c3}), we obtain that
\begin{gather*}
  d(f(x),g(x))\le d(f(x),g_k(x))+\sum\limits_{i=k}^{n-1} d(g_i(x),g_{i+1}(x))<\frac{1}{2^{k+2}}+\frac{1}{2^{k+1}}<\frac{1}{2^k}.
\end{gather*}
\end{proof}

\begin{thm}\label{Sigma_is_BaireOne}
   Let $X$  be a strongly zero-dimensional space and $Y$ be a metrizable space. Then
   $\Sigma_0^f(X,Y)\subseteq {\rm B}_1(X,Y)$.   
\end{thm}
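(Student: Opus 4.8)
The plan is to exhibit $f$ as the pointwise limit of a sequence $(g_N)_{N\ge1}$ of continuous mappings, each $g_N$ produced by Proposition~\ref{prop:main_technical}; the whole difficulty is to manufacture, out of a mere $\sigma$-sfd base, genuine $\prec$-chains of single sfd-families to feed into that proposition.

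First I would fix a $\sigma$-sfd base $\mathcal B=\bigcup_{n=1}^\infty\mathcal B_n$ of zero sets for $f$, each $\mathcal B_n$ an sfd-family, and for $k\ge1$ put $\mathcal B_n^k=\{B\in\mathcal B_n:{\rm diam}\,f(B)<1/2^{k+2}\}$. Each $\mathcal B_n^k$ is still an sfd-family of zero sets (a subfamily of an sfd-family is an sfd-family), one has $\mathcal B_n^{k+1}\subseteq\mathcal B_n^k$, and since $\mathcal B$ is a base for $f$ and $Y$ is metric, $\bigcup_n\mathcal B_n^k$ covers $X$ for every $k$ (pull back a small ball around $f(x)$). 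For each fixed $N$ I then build sfd-families $\mathcal F_1^{(N)}\succ\dots\succ\mathcal F_N^{(N)}$ of zero sets with ${\rm diam}\,f(F)<1/2^{k+2}$ for $F\in\mathcal F_k^{(N)}$, level by level. To form the level-$1$ family I merge $\mathcal B_1^1,\dots,\mathcal B_N^1$ into a single sfd-family: processing them in order, at the $m$-th step I first replace each zero member $B$ of $\mathcal B_m^1$ by a clopen set between $B$ and its discrete cozero neighbourhood — this is where strong zero-dimensionality is used, a zero set and the complement of a cozero set containing it being completely separated — and then discard from the $m$-th family the clopen set $C_{m-1}$ already swallowed by the families $1,\dots,m-1$; since a zero set meets a clopen set in a zero set and the pieces are now separated by clopen sets, the outcome is a single sfd-family of zero sets refining $\bigcup_{m\le N}\mathcal B_m^1$. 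Passing from level $k$ to level $k+1$, I refine inside each $F\in\mathcal F_k^{(N)}$ by the same merging applied to $\{B\cap F:B\in\mathcal B_m^{k+1},\ m\le N\}$ (the "meet" of two sfd-families is again an sfd-family, by intersecting their witness neighbourhoods), and $B\cap F\subseteq F$ guarantees $\mathcal F_{k+1}^{(N)}\prec\mathcal F_k^{(N)}$. Proposition~\ref{prop:main_technical} then gives a continuous $g_N:X\to Y$ with $d(f(x),g_N(x))<1/2^{k}$ whenever $x\in\cup\mathcal F_k^{(N)}$.

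The main obstacle — the only genuinely delicate point — is the pointwise convergence $g_N\to f$. A $\sigma$-sfd base need not contain a single sfd-family of zero sets covering $X$, so each merging step above loses a "slop" set, namely the part of a zero set that falls inside the clopen enlargement of an earlier family without lying in that family, and consequently $X_N:=\cup\mathcal F_N^{(N)}$ is only a proper subset of $X$. I expect to dispose of this exactly as in~\cite{Fos}: at stage $N$ choose all clopen enlargements inside cozero neighbourhoods of the zero sets $\cup\mathcal B_m^{k}$ that shrink to these zero sets as $N\to\infty$, and arrange matters so that the sets $X_N$ increase with $N$ and exhaust $X$ — at a fixed scale only finitely many members of the base pass through a given point $x$, and eventually none of the clopen enlargements used at stage $N$ separates $x$ from them, so $x\in X_N$ for all large $N$. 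Granting this last bookkeeping, every $x$ lies in some $X_M$, whence $d(f(x),g_N(x))<1/2^{N}$ for all $N\ge M$; therefore $g_N\to f$ pointwise and $f\in{\rm B}_1(X,Y)$.
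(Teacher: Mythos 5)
Your overall strategy is the one the paper uses: build, for each $N$, a $\prec$-decreasing chain of sfd-families of zero sets with ${\rm diam}\,f(F)<1/2^{k+2}$ at level $k$, feed it to Proposition~\ref{prop:main_technical}, and let $N\to\infty$. The preparatory steps (restricting each $\mathcal B_n$ to members of small image-diameter, merging finitely many sfd-families into one via clopen enlargements supplied by strong zero-dimensionality, intersecting witnesses to pass from level $k$ to level $k+1$) are all sound. But the step you yourself flag as ``the only genuinely delicate point'' and then skip --- ``I expect to dispose of this exactly as in~\cite{Fos}\dots Granting this last bookkeeping'' --- is a genuine gap, and it is precisely the content of the external result the paper leans on: Lemma~13 of \cite{Karlova:TA:2015}, which produces from each $\sigma$-sfd cover $\mathcal B_k$ a sequence $(\mathcal B_{k,n})_{n}$ of single sfd-families of zero sets with $\mathcal B_{k,n}\prec\mathcal B_k$, $\mathcal B_{k,n}\prec\mathcal B_{k,n+1}$ and $\bigcup_n\cup\mathcal B_{k,n}=X$. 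Given that, the paper obtains the nested chains for free by setting $\mathcal F_{k,n}=(B_1\cap\dots\cap B_k:B_m\in\mathcal B_{m,n})$, so that $\cup\mathcal F_{k,n}=\bigcap_{m\le k}\cup\mathcal B_{m,n}$ increases to $X$ in $n$ and pointwise convergence is immediate; your level-by-level re-merging inside each $F\in\mathcal F_k^{(N)}$ makes the survival analysis harder than it needs to be.

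As for filling the gap, your instinct (shrinking clopen enlargements) is the right one, but two points in your sketch need repair. First, ``at a fixed scale only finitely many members of the base pass through a given point $x$'' is not literally true: discreteness gives at most one member per $\mathcal B_n^k$, hence countably many in all. What saves you is that only the families preceding the first $\mathcal B_{m_0}^k$ actually containing $x$ can spuriously swallow $x$ during the merge, and these are finitely many. Second, asking that the diagonal sets $X_N=\cup\mathcal F_N^{(N)}$ increase with $N$ is stronger than needed and not what your construction delivers; the statement to prove is that for each fixed $k$ one has $x\in\cup\mathcal F_k^{(N)}$ for all sufficiently large $N$, which already gives $d(f(x),g_N(x))<1/2^k$ eventually and hence $g_N\to f$ pointwise. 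To get the shrinking enlargements, for $B$ in the base with cozero witness $U\supseteq B$ pick a continuous $\varphi:X\to[0,1]$ with $B=\varphi^{-1}(0)$ and $X\setminus U\subseteq\varphi^{-1}(1)$, and apply strong zero-dimensionality to the completely separated pairs $\varphi^{-1}\bigl([0,\tfrac{1}{j+1}]\bigr)$ and $\varphi^{-1}\bigl([\tfrac{1}{j},1]\bigr)$ to obtain clopen sets $V_j$ with $B\subseteq V_{j+1}\subseteq V_j\subseteq U$ and $\bigcap_j V_j=B$; using $V_N$ at stage $N$, any $x\notin B$ eventually escapes $B$'s enlargement, and the finitely-many-predecessors observation then yields the required threshold. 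Until this bookkeeping is actually written out (or Lemma~13 of \cite{Karlova:TA:2015} is invoked), the proof is incomplete.
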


\begin{proof} Fix a metric $d$ on $Y$ which generates its topological structure. For every $k\in\mathbb N$ we consider  a covering  $\mathcal U_k$  of $Y$ by open sets  with diameters at most $\frac{1}{2^k}$. Let $f\in \Sigma_0^f(X,Y)$ and $\mathcal B$ be a $\sigma$-sfd base for $f$, which consists of zero subsets of $X$. For every $k\in\mathbb N$ we put
$$
\mathcal B_k=(B\in\mathcal B: \exists U\in \mathcal U_k \,\,\,|\,\,\, B\subseteq f^{-1}(U)).
$$
Then $\mathcal B_k$ is a $\sigma$-sfd family and $X=\cup \mathcal B_k$ for every $k$. According to \cite[Lemma 13]{Karlova:TA:2015} for every  $k\in\mathbb N$ there exists a sequence $(\mathcal B_{k,n})_{n=1}^\infty$ of sfd families of zero subsets of $X$ such that $\mathcal B_{k,n}\prec\mathcal \mathcal B_k$, $\mathcal B_{k,n}\prec\mathcal B_{k,n+1}$  for every $n\in\mathbb N$ and $\bigcup\bigcup\limits_{n=1}^\infty \mathcal B_{k,n}=X$.
For all $k,n\in\mathbb N$ we set
$$
\mathcal F_{k,n}=(B_1\cap\dots\cap B_k: B_m\in \mathcal B_{m,n}, 1\le m\le k).
$$
Notice that each of the families $\mathcal F_{k,n}$ is strongly functionally discrete, consists of zero sets and satisfy the following conditions:
\begin{enumerate}
  \item[(a)] $\mathcal F_{k+1,n}\prec \mathcal F_{k,n}$,

  \item[(b)] $\mathcal F_{k,n}\prec \mathcal F_{k,n+1}$,

  \item[(c)]  $\bigcup\limits_{n=1}^\infty \mathcal F_{k,n}=X$.
\end{enumerate}
For every $n\in\mathbb N$ we apply Proposition~\ref{prop:main_technical} to $f$ and to the families $\mathcal F_{1,n}$, $\mathcal F_{2,n}$,\dots, $\mathcal F_{n,n}$. We get a sequence of continuous mappings  $g_n:X\to Y$ such that the inclusion $x\in \cup\mathcal F_{k,n}$ for some  $k\le n$ implies
$d(f(x),g_n(x))<\frac{1}{2^k}$. It is easy to see that properties~(b) and~(c) imply that  $g_n\to f$ pointwise on $X$. Hence, $f\in {\rm B}_1(X,Y)$.
\end{proof}

\begin{prop}\label{BaireOne_is_SFD}
  Let $X$ be a topological space and $Y$ be a metrizable space. Then
  ${\rm B}_1(X,Y)\subseteq \Sigma^f_0(X,Y)$.
\end{prop}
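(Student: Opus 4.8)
The plan is to build a $\sigma$-sfd base of zero sets for an arbitrary $f\in{\rm B}_1(X,Y)$ directly. Fix a metric $d$ on $Y$ generating its topology and continuous mappings $f_n\colon X\to Y$ with $f_n\to f$ pointwise. Since $Y$ is metrizable, it has a $\sigma$-discrete base $\mathcal W=\bigcup_{s=1}^{\infty}\mathcal W_s$, where each $\mathcal W_s=(W_i^s:i\in J_s)$ is a discrete family of open (equivalently, cozero) subsets of $Y$. For an open set $W\subseteq Y$ and $m\in\mathbb N$ put $A_m(W)=\{y\in Y:d(y,Y\setminus W)\ge 1/m\}$; this is a closed, hence zero, subset of $Y$, and $W=\bigcup_{m=1}^{\infty}A_m(W)$.

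The key step will be the following representation. For $W$ open in $Y$ and $m,N\in\mathbb N$ set $Z_{m,N}(W)=\bigcap_{n\ge N}f_n^{-1}(A_{2m}(W))$. Then $Z_{m,N}(W)$ is a zero set in $X$, being a countable intersection of preimages of zero sets under continuous mappings, and I claim
\[
f^{-1}(W)=\bigcup_{m,N\in\mathbb N}Z_{m,N}(W),\qquad Z_{m,N}(W)\subseteq f_N^{-1}(A_{2m}(W))\subseteq f^{-1}(W).
\]
The inclusion $Z_{m,N}(W)\subseteq f_N^{-1}(A_{2m}(W))$ is immediate (take $n=N$), and since $A_{2m}(W)$ is closed and $f_n(x)\to f(x)$ we get $Z_{m,N}(W)\subseteq f^{-1}(A_{2m}(W))\subseteq f^{-1}(W)$. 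Conversely, if $f(x)\in W$, one chooses $m$ with $d(f(x),Y\setminus W)\ge 1/m$ and then $N$ with $d(f_n(x),f(x))<1/(2m)$ for $n\ge N$; the triangle inequality gives $f_n(x)\in A_{2m}(W)$ for all $n\ge N$, i.e. $x\in Z_{m,N}(W)$.

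Now for fixed $s,m,N$ consider $\mathcal B_{s,m,N}=(Z_{m,N}(W_i^s):i\in J_s)$. It is an sfd-family of zero sets: the family $(f_N^{-1}(W_i^s):i\in J_s)$ consists of cozero subsets of $X$ (continuous preimages of cozero sets) and is discrete in $X$ (the continuous preimage of a discrete family of $Y$ is discrete), while $\overline{Z_{m,N}(W_i^s)}\subseteq f_N^{-1}(A_{2m}(W_i^s))\subseteq f_N^{-1}(W_i^s)$ because $f_N^{-1}(A_{2m}(W_i^s))$ is closed. Hence $\mathcal B=\bigcup_{s,m,N}\mathcal B_{s,m,N}$ is a $\sigma$-sfd family of zero sets, and it is a base for $f$: given $V$ open in $Y$ and $x\in f^{-1}(V)$, pick $s$ and $i\in J_s$ with $f(x)\in W_i^s\subseteq V$; the representation above supplies $m,N$ with $x\in Z_{m,N}(W_i^s)\subseteq f^{-1}(W_i^s)\subseteq f^{-1}(V)$, so $f^{-1}(V)$ is the union of the members of $\mathcal B$ contained in it. Therefore $f\in\Sigma^f_0(X,Y)$.

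I expect the only genuinely delicate point to be the bookkeeping that makes the construction work for all open $V$ simultaneously: one must decompose $f^{-1}(W)$ not for a single $W$ but uniformly along a $\sigma$-discrete base of $Y$, and then notice that the closed sets $f_N^{-1}(A_{2m}(W_i^s))$ sit inside the continuous-preimage family $(f_N^{-1}(W_i^s))_{i\in J_s}$, which is precisely what furnishes the discrete cozero hulls demanded by the definition of an sfd-family; everything else is routine. Incidentally, the displayed representation of $f^{-1}(W)$ also shows ${\rm B}_1(X,Y)\subseteq{\rm K}_1(X,Y)$, so one could alternatively combine ${\rm B}_1(X,Y)\subseteq{\rm K}_1(X,Y)\cap\Sigma^f(X,Y)$ with Theorem~\ref{cor:sigma_F}; but the construction above already yields a base of zero sets, so no appeal to Theorem~\ref{cor:sigma_F} is needed.
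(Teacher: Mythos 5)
Your construction is correct and is essentially the paper's own proof: the paper likewise takes a $\sigma$-discrete base $\mathcal V=\bigcup_m\mathcal V_m$ of $Y$, exhausts each basic $V$ from inside by closed (hence zero) sets $\overline{G_{k,V}}$, forms the tail intersections $F_{k,V}=\bigcap_{n\ge k}f_n^{-1}(\overline{G_{k,V}})$, and uses the discrete cozero family $(f_k^{-1}(V):V\in\mathcal V_m)$ as the sfd hull, exactly as you do with $A_{2m}(W)$ and $(f_N^{-1}(W_i^s):i\in J_s)$ (your two indices $m,N$ versus the paper's single $k$ is immaterial). One small correction: in your displayed chain the middle inclusion $f_N^{-1}(A_{2m}(W))\subseteq f^{-1}(W)$ is false as written and should read $Z_{m,N}(W)\subseteq f^{-1}(A_{2m}(W))\subseteq f^{-1}(W)$, which is what your prose actually proves and all that the argument uses.
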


\begin{proof}
  Let $f\in {\rm B}_1(X,Y)$ and $(f_n)_{n=1}^\infty$ be a sequence of continuous mappings $f_n:X\to Y$ such that  $f(x)=\lim\limits_{n\to\infty} f_n(x)$ for all $x\in X$. Let $\mathcal V=\bigcup\limits_{m=1}^\infty \mathcal V_m$ be a $\sigma$-discrete open base of the space $Y$. For every $V\in \mathcal V$  we choose a sequence $(G_{k,V})_{k=1}^\infty$ of open sets such that $\overline{G_{k,V}}\subseteq G_{k+1,V}$ for every $k\in\mathbb N$ and $V=\bigcup\limits_{k=1}^\infty \overline{G_{k,V}}$. It is not hard to verify that
  \begin{gather}\label{gath:B1isK1}
  f^{-1}(V)=\bigcup\limits_{k=1}^\infty\bigcap\limits_{n=k}^{\infty} f_n^{-1}(\overline{G_{k,V}}).
  \end{gather}
Denote $F_{k,V}=\bigcap\limits_{n=k}^{\infty} f_n^{-1}(\overline{G_{k,V}})$ and notice that every  $F_{k,V}$ is a zero set in $X$. For all $k,m\in\mathbb N$ we put $\mathcal B_{k,m}=(F_{k,V}:V\in\mathcal V_m)$ and  $\mathcal B=\bigcup\limits_{k,m=1}^\infty\mathcal B_{k,m}$. Then $\mathcal B$ is a base for $f$. Moreover, every family $\mathcal B_{k,m}$ is strongly functionally discrete, since $F_{k,V}\subseteq f_k^{-1}(V)$ and the family $(f_k^{-1}(V):V\in\mathcal V_m)$ is discrete and consists of cozero sets. 
\end{proof}

Combining Theorems~\ref{cor:sigma_F},~\ref{Sigma_is_BaireOne} and Proposition~\ref{BaireOne_is_SFD}, we get

\begin{thm}\label{thm:main1}
Let $X$ be a strongly zero-dimensional space and $Y$ be a metrizable space. Then
 ${\rm K}_1(X,Y)\cap\Sigma^f(X,Y)={\rm B}_1(X,Y)$.
\end{thm}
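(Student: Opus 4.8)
The plan is to obtain the stated equality purely by assembling the three preceding results around the auxiliary class $\Sigma_0^f(X,Y)$ of mappings possessing a $\sigma$-sfd base consisting of zero sets, which will turn out to be the common value of all the classes in play. First I would apply Theorem~\ref{cor:sigma_F}, valid for an arbitrary topological space $X$ and an arbitrary metrizable space $Y$, to rewrite the left-hand side as
\[
{\rm K}_1(X,Y)\cap\Sigma^f(X,Y)=\Sigma_0^f(X,Y).
\]
This reduces the theorem to the single equality $\Sigma_0^f(X,Y)={\rm B}_1(X,Y)$ under the additional hypothesis that $X$ is strongly zero-dimensional.

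Next I would establish the two inclusions separately. For $\Sigma_0^f(X,Y)\subseteq{\rm B}_1(X,Y)$ I would quote Theorem~\ref{Sigma_is_BaireOne}; this is precisely the step that consumes strong zero-dimensionality of $X$, entering through the clopen separation used in Proposition~\ref{prop:main_technical} to manufacture approximating continuous mappings from a refining sequence of sfd families of zero sets. For the reverse inclusion ${\rm B}_1(X,Y)\subseteq\Sigma_0^f(X,Y)$ I would quote Proposition~\ref{BaireOne_is_SFD}, which requires nothing of $X$ beyond being a topological space and produces the base from the decomposition~(\ref{gath:B1isK1}).

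Chaining these facts, and invoking Theorem~\ref{cor:sigma_F} once more at the end, gives
\[
{\rm K}_1(X,Y)\cap\Sigma^f(X,Y)=\Sigma_0^f(X,Y)\subseteq{\rm B}_1(X,Y)\subseteq\Sigma_0^f(X,Y)={\rm K}_1(X,Y)\cap\Sigma^f(X,Y),
\]
so every inclusion above is an equality; in particular ${\rm K}_1(X,Y)\cap\Sigma^f(X,Y)={\rm B}_1(X,Y)$. I do not expect any genuine obstacle at the level of this final statement — all the substance has already been localized into Proposition~\ref{prop:main_technical} and Theorem~\ref{cor:sigma_F} — and the only point I would check carefully is that $Y$ is permitted to be merely metrizable (not necessarily separable) in each of the three cited results and that everywhere the \emph{functional} side of the theory (cozero/zero sets, ${\rm K}_1$, sfd-families) is used uniformly rather than its topological counterpart, which is exactly why the class $\Sigma_0^f$ was singled out.
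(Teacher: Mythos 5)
Your proposal is correct and is essentially identical to the paper's own proof, which likewise obtains the result by combining Theorem~\ref{cor:sigma_F} (to identify ${\rm K}_1(X,Y)\cap\Sigma^f(X,Y)$ with $\Sigma_0^f(X,Y)$), Theorem~\ref{Sigma_is_BaireOne} (for $\Sigma_0^f(X,Y)\subseteq{\rm B}_1(X,Y)$, where strong zero-dimensionality is used), and Proposition~\ref{BaireOne_is_SFD} (for the reverse inclusion). Your closing checks on non-separable $Y$ and the uniform use of the functional (zero/cozero) notions are exactly the right points to verify, and they hold in each cited result.
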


According to Theorem 3 from~\cite{Hansell:1971} we have ${\rm K}_1(X,Y)\subseteq \Sigma^f(X,Y)$ for any completely metrizable $X$ and metrizable~$Y$. This fact and Theorem~\ref{thm:main1} immediately imply the following result.

\begin{thm}\label{thm:complete_case}
  Let $X$ be a completely metrizable strongly zero-dimensional space and $Y$ be a metrizable space. Then
 ${\rm K}_1(X,Y)={\rm B}_1(X,Y)$.
\end{thm}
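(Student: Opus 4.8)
The plan is to obtain Theorem~\ref{thm:complete_case} as a short corollary of Theorem~\ref{thm:main1}, feeding into it one external fact about $\sigma$-strongly functionally discrete mappings on complete metric spaces. First I would dispose of the inclusion ${\rm B}_1(X,Y)\subseteq{\rm K}_1(X,Y)$, which needs neither completeness nor zero-dimensionality: by Proposition~\ref{BaireOne_is_SFD} every $f\in{\rm B}_1(X,Y)$ lies in $\Sigma^f_0(X,Y)$, and $\Sigma^f_0(X,Y)\subseteq{\rm K}_1(X,Y)$ by Theorem~\ref{cor:sigma_F}; alternatively this inclusion is already built into Theorem~\ref{thm:main1}, whose right-hand side is exactly ${\rm B}_1(X,Y)$.

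For the reverse inclusion ${\rm K}_1(X,Y)\subseteq{\rm B}_1(X,Y)$, I would take $f\in{\rm K}_1(X,Y)$ and invoke Theorem~3 of \cite{Hansell:1971}: since $X$ is completely metrizable and $Y$ is metrizable, $f$ admits a $\sigma$-strongly functionally discrete base, i.e.\ $f\in\Sigma^f(X,Y)$. Hence $f\in{\rm K}_1(X,Y)\cap\Sigma^f(X,Y)$, and now Theorem~\ref{thm:main1} applies because $X$ is strongly zero-dimensional and $Y$ is metrizable, giving $f\in{\rm B}_1(X,Y)$. Putting the two inclusions together yields the claimed equality.

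In this form the argument has no genuine obstacle of its own: all the work has already been carried out in the preceding results, namely the clopen-refinement construction of the approximating continuous maps in Proposition~\ref{prop:main_technical} and the passage from a $\sigma$-sfd base of zero sets to a pointwise-convergent sequence of continuous maps in Theorem~\ref{Sigma_is_BaireOne}. The one substantive import is Hansell's theorem that a first-class (indeed Borel-measurable) mapping from a complete metric space into a metric space is $\sigma$-strongly functionally discrete; if a self-contained treatment were wanted, reproving that statement --- via a complete sequence of $\sigma$-discrete partitions adapted to the Borel structure of $X$ --- would be the single nontrivial point.
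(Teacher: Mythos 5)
Your proposal is correct and follows exactly the paper's own route: the author likewise derives the theorem by citing Theorem~3 of Hansell's 1971 paper to get ${\rm K}_1(X,Y)\subseteq\Sigma^f(X,Y)$ for completely metrizable $X$ and metrizable $Y$, and then applies Theorem~\ref{thm:main1}. Your extra remarks on the easy inclusion ${\rm B}_1\subseteq{\rm K}_1$ and on where the real work lives are accurate but add nothing beyond what the paper's two-line deduction already contains.
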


We show that the metrizability of $Y$ in Theorem~\ref{thm:complete_case} is essential.

\begin{exa}
  There exists a  completely metrizable strongly zero-dimensional space  $X$ and a Lindel\"{o}f strongly zero-dimensional space $Y$ such that ${\rm K}_1(X,Y)\setminus {\rm B}_1(X,Y)\ne\emptyset$.
\end{exa}

\begin{proof}
   Let  $X$ be the set of all irrational numbers  with the euclidian topology and $Y$ be the same set with the topology induced from the Sorgenfrey line (recall that the Sorgenfrey line is the real line $\mathbb R$ endowed with the topology generated by the base consisting of all semi-intervals $[a,b)$, where $a<b$).

Take a countable dense in $X$ set $Q=\{q_n:n\in {\mathbb N}\}$ and for all $x\in X$ we put
$$
f(x)=\left\{\begin{array}{ll}
  x-\frac{1}{n}, & \mbox{if\,\,\,} x=q_n, \\
  x, & \mbox{if\,\,\,}x\in X\setminus Q.
\end{array}
\right.
$$

To show that $f\in {\rm K}_1(X,Y)$ it is enough to verify that $f^{-1}([a,b)\cap Y)$ is an $F_\sigma$-set in $X$ for every $a<b$, since $Y$ is Lindel\"{o}f. Denote $E=f^{-1}([a,b)\cap Y)$. Notice that the sets $A=(X\cap [a,b))\setminus E$ and $B=E\setminus (a,b)$ are subsets of $Q$.
Let $(n_k)_{k=1}^\infty$ be an increasing sequence of numbers such that $A=\{q_{n_k}:k\in\mathbb N\}$. Then $a\le r_{n_k}\le a+\frac{1}{n_k}$ for every $k$.
Therefore, $\lim\limits_{k\to\infty}r_{n_k}=a$ in $X$. Hence, the set $A$ is $G_\delta$ in $X$. Then the equality  $E=(X\cap [a,b)\setminus A)\cup B$ implies that $E$ is an $F_\sigma$-set in $X$.

Now we prove that $f\not\in {\rm B}_1(X,Y)$. Assume that there exists a sequence of continuous mappings $f_n:X\to Y$ such that $f_n(x)\to f(x)$ for every $x\in X$. Let
$$
A_n=\{x\in X: \forall k\ge n\,\,\, f_k(x)\ge f(x)\}
$$
for $n\in\mathbb N$. It is easy to see that $\bigcup\limits_{n=1}^\infty A_n=X$. Since the set $X\setminus Q$  is of the second category in $X$, there exist a number $n$ and a set $[a,b]$ such that $[a,b]\cap X\subseteq \overline{A_n\setminus Q}$.
Notice that $A_n\setminus Q\subseteq F$, where $F=\bigcap\limits_{k=n}^\infty\{x\in X: f_k(x)\ge x\}$. Since $F$ is closed in $X$, we have
$[a,b]\cap X\subseteq F$. Then  $f(x)=\lim\limits_{k\to\infty} f_k(x)\ge x$ for all $x\in [a,b]\cap X$, a contradiction.
\end{proof}

\section{Almost strongly zero-dimensional spaces and characterization theorems}

In this section we find necessary conditions on a space $X$  under which the equality ${\rm K}_1(X,Y)\cap\Sigma^f(X,Y)={\rm B}_1(X,Y)$ holds for any disconnected metrizable space $Y$.

\begin{defin} {\rm A subset  $F$ of a topological space $X$ is called {\it a $C$-set} if there exists a sequence $(U_n)_{n=1}^\infty$ of clopen sets in $X$ such that $F=\bigcap\limits_{n=1}^\infty U_n$. A set is called {\it a $C_\sigma$-set} if it is a union of a sequence of $C$-sets.}
\end{defin}

\begin{defin}
{\rm We say that a topological space $X$ is {\it almost strongly zero-dimensional} if every zero subset of $X$ is a $C_\sigma$-set.}
\end{defin}

Notice that  every strongly zero-dimensional space is almost strongly zero-dimensional.

\begin{lem}\label{lemma:separated_C_sets}
  Let $X$ be a topological space, $C_1$ and $C_2$ be disjoint $C$-subsets of $X$. Then there exists a clopen set  $G$ in $X$ such that $C_1\subseteq G\subseteq X\setminus C_2$.
\end{lem}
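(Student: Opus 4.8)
The plan is to produce the separating clopen set explicitly from the two defining sequences. Write $C_1=\bigcap_{n=1}^{\infty}U_n$ and $C_2=\bigcap_{n=1}^{\infty}V_n$ with all $U_n,V_n$ clopen; after replacing $U_n$ by $U_1\cap\cdots\cap U_n$ (and $V_n$ likewise) and setting $U_0=V_0=X$, we may assume both sequences are decreasing. Then set
$$G=\bigcup_{n\ge 1}(U_n\setminus V_n),$$
which is open, being a union of clopen sets, and the claim is that this $G$ does the job.

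First come the easy inclusions. If $x\in C_1$ then $x\in U_n$ for every $n$, and since $C_1\cap C_2=\emptyset$ there is some $n_0$ with $x\notin V_{n_0}$, so $x\in U_{n_0}\setminus V_{n_0}\subseteq G$; hence $C_1\subseteq G$. If $x\in C_2$ then $x\in V_n$ for every $n$, so $x$ lies in no set $U_n\setminus V_n$ and $x\notin G$; hence $G\subseteq X\setminus C_2$.

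The substantive point is that $G$ is also closed, and this is exactly where the decreasing normalization of the two sequences gets used. Given $x\notin G$, note first that $x\notin C_1$ (since $C_1\subseteq G$), so there is a largest index $j\ge 0$ with $x\in U_j$. Because $x$ lies in every $U_n$ with $n\le j$ but in no $U_n\setminus V_n$, it follows that $x\in V_n$ for all $n\le j$; in particular $x\in V_j$. I then verify that the clopen set $N=V_j\cap(X\setminus U_{j+1})$ is a neighborhood of $x$ disjoint from $G$: any $y\in N$ lies in $V_n$ for $n\le j$ and outside $U_n$ for $n\ge j+1$, by monotonicity of the two sequences, hence in no $U_n\setminus V_n$, i.e. $y\notin G$. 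Thus $X\setminus G$ is open, $G$ is clopen, and $C_1\subseteq G\subseteq X\setminus C_2$ as required. The one thing one has to notice — the mild surprise of the argument — is that the naive open separator $\bigcup_n(U_n\setminus V_n)$ becomes automatically closed once both defining sequences of clopen sets are arranged to be decreasing; without that normalization it need not be.
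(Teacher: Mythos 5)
Your proof is correct and is essentially the paper's argument in a complementary notation: the paper writes $X\setminus C_i$ as an increasing union of clopen sets and takes $G=\bigcup_n\bigl(V_n\setminus\bigcup_{k\le n}U_k\bigr)$, which is the same union of clopen differences you obtain after normalizing both sequences to be decreasing, and both closedness arguments rest on the same observation that a point outside $C_1$ has a clopen neighborhood meeting at most finitely many of the pieces. No gaps.
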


\begin{proof}
  Let $(U_n)_{n=1}^\infty$ and $(V_n)_{n=1}^\infty$ be sequences of clopen subsets of  $X$ such that $X\setminus C_1=\bigcup\limits_{n=1}^\infty U_n$ and $X\setminus C_2=\bigcup\limits_{n=1}^\infty V_n$. For every $n\in\mathbb N$ put $G_n=V_n\setminus \bigcup\limits_{k=1}^n U_k$ and let $G=\bigcup\limits_{n=1}^\infty G_n$. Clearly, $C_1\subseteq G\subseteq X\setminus C_2$ and $G$ is open in $X$. It remains to show that $G$ is closed. Let  $x\in\overline G$. If $x\in C_1$ then $x\in G$. If $x\not\in C_1$, then there is $N\in\mathbb N$ such that $x\in U_N$. Notice that $U_N\cap G_n=\emptyset$ for all $n\ge N$. Then  $x\in \overline{\bigcup\limits_{n=1}^{N-1}G_n}=\bigcup\limits_{n=1}^{N-1}\overline{G_n}=\bigcup\limits_{n=1}^{N-1} G_n\subseteq G$.
\end{proof}

\begin{cor}\label{cor:separated_n-C-sets}
  Let $X$ be a topological space and $C_1,\dots,C_n$ be disjoint  $C$-subsets of $X$, $n\in\mathbb N$. Then there exist disjoint clopen sets  $G_1,\dots,G_n$ in $X$ such that $X=G_1\cup\dots\cup G_n$ and $C_i\subseteq G_i$ for every  $i=1,\dots,n$.
\end{cor}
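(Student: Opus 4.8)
The plan is to induct on $n$, with Lemma~\ref{lemma:separated_C_sets} doing all the work. For $n=1$ there is nothing to prove: take $G_1=X$.

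For the inductive step, assume the statement for $n-1$ and let $C_1,\dots,C_n$ be pairwise disjoint $C$-sets in $X$. First I would record that a finite union of $C$-sets is again a $C$-set: if $C=\bigcap_{k}U_k$ and $C'=\bigcap_{m}V_m$ with all $U_k,V_m$ clopen, then $C\cup C'=\bigcap_{k,m}(U_k\cup V_m)$ is a countable intersection of clopen sets. Hence $D:=C_1\cup\dots\cup C_{n-1}$ is a $C$-set disjoint from $C_n$, and Lemma~\ref{lemma:separated_C_sets} yields a clopen set $H$ in $X$ with $D\subseteq H\subseteq X\setminus C_n$.

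Next I would pass to the clopen subspace $H$. Each $C_i$ with $i\le n-1$ lies in $H$ and is a $C$-set in $H$ (intersect its defining clopen sets with $H$), and the $C_i$ remain pairwise disjoint; so the induction hypothesis, applied inside $H$, produces pairwise disjoint sets $G_1,\dots,G_{n-1}$, clopen in $H$, with $H=G_1\cup\dots\cup G_{n-1}$ and $C_i\subseteq G_i$. Since $H$ is clopen in $X$, each $G_i$ is clopen in $X$ as well. Finally put $G_n:=X\setminus H$; it is clopen, contains $C_n$ (because $H\subseteq X\setminus C_n$), is disjoint from each $G_i\subseteq H$, and $G_1\cup\dots\cup G_{n-1}\cup G_n=H\cup(X\setminus H)=X$, completing the induction.

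There is essentially no obstacle here: the only point needing a word of justification is that Lemma~\ref{lemma:separated_C_sets}, stated for two $C$-sets, is applicable after collapsing $C_1,\dots,C_{n-1}$ into the single $C$-set $D$. Should one prefer to avoid even that, the same conclusion follows by a one-shot construction: for each pair $i\ne j$ choose via the Lemma a clopen $H_{ij}$ with $C_i\subseteq H_{ij}\subseteq X\setminus C_j$, set $W_i=\bigcap_{j\ne i}H_{ij}$ (clopen, containing $C_i$, missing every $C_j$ with $j\ne i$), and disjointify by $G_i=W_i\setminus(W_1\cup\dots\cup W_{i-1})$ for $i<n$ together with $G_n=X\setminus(W_1\cup\dots\cup W_{n-1})$.
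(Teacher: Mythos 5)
Your argument is correct: the paper states this as an unproved corollary of Lemma~\ref{lemma:separated_C_sets}, and both of your derivations (the induction via the observation that a finite union of $C$-sets is a $C$-set, and the one-shot disjointification of the $W_i$) are valid instances of the evidently intended route of applying that lemma pairwise. No gaps.
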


\begin{prop}\label{prop:aszd_is_totally_sep}
 Every almost strongly zero-dimensional completely regular space $X$ is totally separated.
\end{prop}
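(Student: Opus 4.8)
The plan is to verify the defining property of total separatedness directly: given two distinct points, produce a clopen set containing one and missing the other.

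So fix distinct points $x,y\in X$. Since $X$ is completely regular (in particular points are separated by continuous real functions), there is a continuous function $f\colon X\to[0,1]$ with $f(x)=0$ and $f(y)=1$. Then $Z:=f^{-1}(0)$ is a zero subset of $X$ with $x\in Z$ and $y\notin Z$. Now I invoke the hypothesis that $X$ is almost strongly zero-dimensional: the zero set $Z$ is a $C_\sigma$-set, say $Z=\bigcup_{n=1}^\infty C_n$ with each $C_n$ a $C$-set. Pick $n$ with $x\in C_n$ and write $C_n=\bigcap_{k=1}^\infty U_k$ with each $U_k$ clopen in $X$. Since $y\notin Z\supseteq C_n$, there is an index $k$ with $y\notin U_k$, while $x\in C_n\subseteq U_k$. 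Thus $U_k$ is a clopen set containing $x$ but not $y$, which is exactly what is needed.

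There is essentially no obstacle here; the argument is immediate once the definitions are unwound. The only point deserving a line of care is ensuring that the point $x$ to be isolated lies in a zero set missing $y$ — this is supplied by complete regularity together with the separation of points by continuous functions (i.e. the $T_1$/Hausdorff assumption implicit in ``completely regular''), and without some such separation axiom the statement is false (an indiscrete space is a trivial counterexample).

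Alternatively, one can phrase the same proof through Lemma~\ref{lemma:separated_C_sets}: with $f$ as above, the zero sets $Z_x=f^{-1}([0,\tfrac13])$ and $Z_y=f^{-1}([\tfrac23,1])$ are disjoint and contain $x$ and $y$ respectively; writing each as a countable union of $C$-sets and choosing $C$-sets $C\ni x$ with $C\subseteq Z_x$ and $D\ni y$ with $D\subseteq Z_y$, one has $C\cap D=\emptyset$, and Lemma~\ref{lemma:separated_C_sets} yields a clopen set $G$ with $x\in C\subseteq G\subseteq X\setminus D\ni y$. Either route gives the conclusion.
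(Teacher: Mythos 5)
Your proof is correct. Your primary argument is actually more elementary than the paper's: the paper takes \emph{two} disjoint zero neighborhoods of $x$ and $y$, extracts a $C$-set inside each, and then invokes Lemma~\ref{lemma:separated_C_sets} to produce a clopen set separating them. You observe that this machinery is unnecessary for total separatedness: a single zero set $Z\ni x$ with $y\notin Z$ suffices, because once you have a $C$-set $C_n$ with $x\in C_n\subseteq Z$, one of the clopen sets $U_k$ in the representation $C_n=\bigcap_k U_k$ must already exclude $y$, and that $U_k$ is the required clopen separator. What the paper's route buys is nothing extra here (the lemma is needed elsewhere anyway), while your route makes the proposition a one-line consequence of the definitions. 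Your ``alternative'' phrasing at the end is essentially the paper's own proof. Your remark about the implicit $T_1$/functionally-Hausdorff content of ``completely regular'' is apt --- both proofs need points to be separated by a continuous function, which the paper uses silently when it asserts the existence of disjoint zero neighborhoods.
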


\begin{proof}
  Let $x,y\in X$ be distinct points and $U,V$ be disjoint zero neighborhoods of $x$ and $y$, respectively. Since $X$ is almost strongly zero-dimensional, there exist $C$-sets $C_x$ and $C_y$ such that $x\in C_x\subseteq U$ and $y\in C_y\subseteq V$. By Lemma~\ref{lemma:separated_C_sets} there exists a clopen set $G$ such that $C_x\subseteq G$ and $G\cap C_y=\emptyset$. Hence, $x$ and $y$ can be separated by a clopen set which implies that $X$ is totally separated.
\end{proof}

\begin{lem}\label{lemma:separated_count_comp_and_C}
Let $X$ be a topological space, $F\subseteq X$ be a countably compact $C_\sigma$-set,  $C\subseteq X$ be a $C$-set and $F\cap C=\emptyset$. Then there exists a clopen set  $G$ in $X$ such that $F\subseteq G\subseteq X\setminus C$.
\end{lem}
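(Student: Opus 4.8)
The plan is to combine the $C$-set separation Lemma~\ref{lemma:separated_C_sets} with the countable compactness of $F$. Since $F$ is a $C_\sigma$-set, I would first fix a representation $F=\bigcup_{n=1}^\infty F_n$ with each $F_n$ a $C$-set. Because $F_n\subseteq F$ and $F\cap C=\emptyset$, each $F_n$ is a $C$-set disjoint from the $C$-set $C$, so Lemma~\ref{lemma:separated_C_sets} yields a clopen set $H_n$ in $X$ with $F_n\subseteq H_n\subseteq X\setminus C$. Replacing $H_n$ by $G_n=H_1\cup\dots\cup H_n$, I obtain an increasing sequence of clopen sets in $X$, still satisfying $G_n\subseteq X\setminus C$ (a finite union of clopen subsets of $X\setminus C$ is again a clopen subset of $X\setminus C$), whose union contains $F$.

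The key step is then to observe that $(G_n)_{n=1}^\infty$ is a countable family of open subsets of $X$ covering the countably compact set $F$; hence finitely many of them already cover $F$, and since the family is increasing there is a single index $N$ with $F\subseteq G_N$. Taking $G=G_N$ gives a clopen set in $X$ with $F\subseteq G\subseteq X\setminus C$, which is exactly what is claimed.

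I do not expect a serious obstacle here: the only points to watch are to make the cover increasing \emph{before} invoking countable compactness, so that "finite subcover" collapses to "one member suffices", and to keep the $G_n$ inside $X\setminus C$, which is automatic. If one prefers to avoid the open-cover formulation of countable compactness, the same conclusion follows by noting that the sets $F\setminus G_n$ form a decreasing sequence of relatively closed subsets of $F$ with empty intersection, so one of them must be empty, i.e. $F\subseteq G_n$ for some $n$.
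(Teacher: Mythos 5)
Your proof is correct and follows essentially the same route as the paper: decompose $F$ into $C$-sets, separate each from $C$ by a clopen set via Lemma~\ref{lemma:separated_C_sets}, and use countable compactness to extract a finite subcover. The only cosmetic difference is that the paper makes the sequence of $C$-sets increasing and unions a finite subcover at the end, whereas you make the clopen sets increasing, which harmlessly sidesteps the (easy) fact that finite unions of $C$-sets are again $C$-sets.
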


\begin{proof}
Let $(C_n)_{n=1}^\infty$ be an increasing sequence of $C$-sets such that $F=\bigcup\limits_{n=1}^\infty C_n$. Lemma~\ref{lemma:separated_C_sets} implies that for every $n$ there exists a clopen set $G_n$ in $X$ such that  $C_n\subseteq G_n\subseteq X\setminus C$. Since $F$ is countably compact, we choose a finite subcovering $\mathcal G$ of the covering $(G_n:n\in\mathbb N)$ of $F$. It remains to put $G=\cup\mathcal G$.
\end{proof}

In the same manner we can prove the following result.

\begin{lem}\label{lemma:separated_count_comp_and_count_comp}
Let $X$ be a topological space and $F,E\subseteq X$ be disjoint countably compact $C_\sigma$-sets. Then there exists a clopen set $G$ in $X$ such that  $F\subseteq G\subseteq X\setminus E$.
\end{lem}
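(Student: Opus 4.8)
The plan is to mimic the proof of Lemma~\ref{lemma:separated_count_comp_and_C} with only cosmetic changes, replacing the single $C$-set $C$ by an exhausting sequence of $C$-sets for $E$ and using the countable compactness of \emph{both} $F$ and $E$ rather than just $F$.

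First I would write $F=\bigcup_{n=1}^\infty C_n$ and $E=\bigcup_{n=1}^\infty D_n$, where $(C_n)$ and $(D_n)$ are increasing sequences of $C$-sets. Since $F\cap E=\emptyset$, in particular $C_n\cap D_m=\emptyset$ for all $n,m$, so Lemma~\ref{lemma:separated_C_sets} applies to each disjoint pair $(C_n,D_n)$ and yields a clopen set $G_n$ with $C_n\subseteq G_n\subseteq X\setminus D_n$. This already separates the ``$n$-th pieces'' of $F$ and $E$, but the $G_n$ need not be nested and a naive union $\bigcup_n G_n$ might hit $E$, while a naive intersection might miss $F$; the point of countable compactness is to extract finite subfamilies that behave well.

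Here is where the two compactness hypotheses both enter. The family $(G_n:n\in\mathbb N)$ is an open cover of $F$ (since $C_n\subseteq G_n$ and $F=\bigcup C_n$), so by countable compactness of $F$ there is a finite subset $N_0\subseteq\mathbb N$ with $F\subseteq\bigcup_{n\in N_0}G_n=:G$, which is clopen. Symmetrically, I would first arrange --- or simply observe after shrinking --- that $G$ avoids $E$: since the $C$-sets $D_m$ exhaust $E$ and each $G_n$ (for $n\in N_0$) is contained in $X\setminus D_n$, we have $G\cap D_m=\emptyset$ whenever $m\ge\max N_0$; the ``low'' pieces $D_1,\dots,D_{\max N_0-1}$ need a separate argument. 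To handle them cleanly it is easier to run the construction symmetrically: apply Lemma~\ref{lemma:separated_C_sets} to the disjoint $C$-sets $C_n$ and $D_n$ to get clopen $G_n$ with $C_n\subseteq G_n$ and $D_n\cap G_n=\emptyset$; then $(G_n)$ covers $F$, extract a finite subcover indexed by $N_0$ and set $G'=\bigcup_{n\in N_0}G_n$; likewise $(X\setminus G_n)$ covers $E$, extract a finite subcover indexed by $M_0$ and set $H'=\bigcup_{m\in M_0}(X\setminus G_m)$, a clopen superset of $E$ disjoint from $\bigcap_{m\in M_0}G_m\supseteq F\cap\bigcap_{m\in M_0}G_m$. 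Then $G:=G'\cap\bigcap_{m\in M_0}G_m$ is clopen, contains $F$ (it is an intersection of clopen supersets of the appropriate $C_n$'s, after replacing each $C_n$ by $C_n$ with $n$ large enough that $C_n$ dominates the relevant indices, using that the $C_n$ increase), and is disjoint from $E$ because $E\subseteq H'=X\setminus\bigcap_{m\in M_0}G_m\subseteq X\setminus G$.

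The main obstacle is purely bookkeeping: matching indices so that the finitely many clopen sets chosen on the $F$-side and on the $E$-side are mutually compatible, i.e.\ that the $G_n$ used to cover $F$ are the same $G_n$ whose complements are used to cover $E$. Using a \emph{single} sequence of clopen sets $(G_n)$ that separates $C_n$ from $D_n$ for the same $n$, together with the monotonicity of $(C_n)$ and $(D_n)$, makes this automatic: choosing a common finite index set $N\supseteq N_0\cup M_0$ and replacing $C_n,D_n$ by $C_{\max N},D_{\max N}$ where needed, one gets $F\subseteq\bigcup_{n\in N}G_n$ and $E\subseteq\bigcup_{n\in N}(X\setminus G_n)$ simultaneously, and a short Boolean computation with these finitely many clopen sets produces the desired clopen $G$ with $F\subseteq G\subseteq X\setminus E$. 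No serious topology beyond Lemma~\ref{lemma:separated_C_sets} and the definition of countable compactness is needed.
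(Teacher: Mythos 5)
Your reduction to Lemma~\ref{lemma:separated_C_sets} applied to the ``diagonal'' pairs $(C_n,D_n)$ does not work, and the gap is not bookkeeping. The clopen set $G_n$ you obtain satisfies only $C_n\subseteq G_n$ and $G_n\cap D_n=\emptyset$; it carries no information about $C_m$ or $D_m$ for $m\neq n$. Consequently your final set $G=G'\cap\bigcap_{m\in M_0}G_m$ need not contain $F$: for that you would need $F\subseteq G_m$ for every $m\in M_0$, whereas $G_m$ is only known to contain the (possibly much smaller) piece $C_m$. The suggested repair --- ``replacing each $C_n$ by $C_{\max N}$'' --- does not help: even $G_m\supseteq C_{\max N}$ would not give $G_m\supseteq F$, and re-choosing the $G_n$ invalidates the finite subcovers already extracted. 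In fact no Boolean combination of the finitely many diagonal separators $G_1,\dots,G_N$ can be guaranteed to succeed: a point $x\in F\setminus C_N$ is constrained only by $x\in\bigcup_{n\le N}G_n$, and a point $y\in E\setminus D_N$ only by $y\notin\bigcap_{n\le N}G_n$, so $x$ and $y$ may lie in exactly the same cell of the partition generated by $G_1,\dots,G_N$, and then no set built from these $G_n$ separates them.

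The correct route --- and what the paper means by proving this ``in the same manner'' as Lemma~\ref{lemma:separated_count_comp_and_C} --- is to separate each piece of one set from the \emph{whole} of the other set before extracting a finite subcover. Write $F=\bigcup_{n=1}^\infty C_n$ with $(C_n)$ an increasing sequence of $C$-sets. For each $n$ the $C$-set $C_n$ and the countably compact $C_\sigma$-set $E$ are disjoint, so Lemma~\ref{lemma:separated_count_comp_and_C} yields a clopen set $H_n$ with $E\subseteq H_n\subseteq X\setminus C_n$; putting $G_n=X\setminus H_n$ gives $C_n\subseteq G_n\subseteq X\setminus E$. Now $(G_n)_{n\in\mathbb N}$ is a clopen cover of $F$ every member of which already avoids all of $E$, and countable compactness of $F$ gives a finite subfamily whose union is the desired clopen $G$. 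Note that the countable compactness of $E$ is consumed inside the application of Lemma~\ref{lemma:separated_count_comp_and_C}, not in a second, parallel subcover extraction as in your argument.
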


Taking into account that every closed subset of a countably compact space is countably compact, we obtain the following corollary from Lemma~\ref{lemma:separated_count_comp_and_count_comp}.

\begin{prop}\label{thm:ASZD_equiv}
Let $X$ be a countably compact space. Then $X$ is almost strongly zero-dimensional if and only if it is strongly zero-dimensional.
\end{prop}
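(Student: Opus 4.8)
The plan is to prove Proposition~\ref{thm:ASZD_equiv} by establishing both implications, the nontrivial one being that almost strong zero-dimensionality plus countable compactness yields strong zero-dimensionality. The reverse implication is already recorded in the excerpt: every strongly zero-dimensional space is almost strongly zero-dimensional, with no compactness needed. So the whole content is the forward direction.

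First I would recall the definition of strong zero-dimensionality used in the paper: $X$ is strongly zero-dimensional if any two completely separated sets $A,B$ can be separated by a clopen set. So let $A$ and $B$ be completely separated subsets of $X$; then there is a continuous $h:X\to[0,1]$ with $A\subseteq h^{-1}(0)$ and $B\subseteq h^{-1}(1)$. Put $F=h^{-1}([0,1/3])$ and $E=h^{-1}([2/3,1])$; these are disjoint zero sets containing $A$ and $B$ respectively. Since $X$ is almost strongly zero-dimensional, $F$ and $E$ are $C_\sigma$-sets. Because $F$ and $E$ are zero sets they are closed in $X$, and a closed subset of a countably compact space is countably compact; hence $F$ and $E$ are disjoint countably compact $C_\sigma$-sets. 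Now Lemma~\ref{lemma:separated_count_comp_and_count_comp} applies directly and produces a clopen set $G$ with $F\subseteq G\subseteq X\setminus E$, and then $A\subseteq G\subseteq X\setminus B$. Since $A$ and $B$ were arbitrary completely separated sets, $X$ is strongly zero-dimensional, completing the proof.

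There is essentially no obstacle here: the proposition is explicitly flagged in the excerpt as a corollary of Lemma~\ref{lemma:separated_count_comp_and_count_comp}, and the only things one needs to check are the entirely routine facts that zero sets are closed and that closed subsets of countably compact spaces are countably compact. The one point worth stating carefully is the passage from ``completely separated $A,B$'' to ``disjoint zero sets $F\supseteq A$, $E\supseteq B$,'' since Lemma~\ref{lemma:separated_count_comp_and_count_comp} is phrased for disjoint $C_\sigma$-sets and we want to feed it the zero sets $F,E$ rather than $A,B$ themselves (indeed $A$ and $B$ need not be zero sets or even closed). I would therefore write the proof as: produce $F,E$ from the Urysohn function, note they are disjoint zero sets, invoke almost strong zero-dimensionality to see they are $C_\sigma$-sets, invoke countable compactness to see they are countably compact, apply the Lemma, and conclude.
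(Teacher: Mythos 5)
Your proof is correct and follows the same route as the paper: the paper derives this proposition as an immediate corollary of Lemma~\ref{lemma:separated_count_comp_and_count_comp} together with the observation that closed subsets of countably compact spaces are countably compact. You merely spell out the (routine but worth stating) reduction from completely separated sets $A,B$ to the disjoint zero sets $F=h^{-1}([0,1/3])$ and $E=h^{-1}([2/3,1])$, which the paper leaves implicit.
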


The following question is open.
\begin{question}
  Do there exists a completely regular almost dimensional space which is not strongly zero-dimensional?
\end{question}

\begin{prop}\label{prop:K1_in_B1_implies_ASZD}
  Let $X$ be a topological space, $Y$ be a disconnected space such that ${\rm K}_1(X,Y)\cap\Sigma^f(X,Y)\subseteq {\rm B}_1(X,Y)$. Then $X$ is almost strongly zero-dimensional.
\end{prop}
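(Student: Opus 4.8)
The plan is to feed the hypothesis a single two-valued test mapping built from an arbitrary zero set. Fix a zero set $Z=h^{-1}(0)$ in $X$, where $h\colon X\to[0,1]$ is continuous, and use the disconnectedness of $Y$ to fix a partition $Y=Y_0\cup Y_1$ into disjoint nonempty clopen sets together with points $y_0\in Y_0$ and $y_1\in Y_1$. Define $f\colon X\to Y$ by $f(x)=y_0$ for $x\in Z$ and $f(x)=y_1$ for $x\in X\setminus Z$. First I would check that $f\in{\rm K}_1(X,Y)\cap\Sigma^f(X,Y)$, so that the assumption forces $f\in{\rm B}_1(X,Y)$, and then read off from a continuous approximating sequence for $f$ that $Z$ is a $C_\sigma$-set; since $Z$ is an arbitrary zero set, this is exactly the statement that $X$ is almost strongly zero-dimensional.

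For the membership $f\in{\rm K}_1(X,Y)$: as $f$ takes only the values $y_0,y_1$, the preimage of any open $V\subseteq Y$ is one of $\emptyset$, $Z$, $X\setminus Z$, $X$, and each of these is functionally $F_\sigma$ (a zero set and a cozero set trivially are). For $f\in\Sigma^f(X,Y)$ — in fact $f\in\Sigma^f_0(X,Y)$ — put $F_n=h^{-1}([1/n,1])$; these are zero sets with $X\setminus Z=\bigcup_{n=1}^\infty F_n$, so $\mathcal B=\{Z\}\cup\{F_n:n\in\mathbb N\}$ is a base for $f$ consisting of zero sets, and splitting $\mathcal B$ into its one-element subfamilies realizes it as a $\sigma$-sfd base, because a singleton family $(A)$ is strongly functionally discrete (witnessed by the cozero set $X$).

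Finally, let $(f_n)_{n=1}^\infty$ be continuous mappings $X\to Y$ with $f_n\to f$ pointwise, and set $A_n=f_n^{-1}(Y_0)$, which is clopen in $X$ because $Y_0$ is clopen in $Y$. I claim $Z=\bigcup_{k=1}^\infty\bigcap_{n=k}^\infty A_n$: if $x\in Z$ then $f_n(x)\to y_0$ and $Y_0$ is open, so $x\in A_n$ for all large $n$; while if $x\in\bigcap_{n\ge k}A_n$ for some $k$ but $x\notin Z$, then $f_n(x)\to y_1\in Y_1$ with $Y_1$ open and disjoint from $Y_0$, forcing $f_n(x)\notin Y_0$ for large $n$, a contradiction. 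Each set $\bigcap_{n=k}^\infty A_n$ is a countable intersection of clopen sets, hence a $C$-set, so $Z$ is a $C_\sigma$-set, completing the argument. The only point deserving a little care is the verification that $f\in\Sigma^f(X,Y)$; everything else is immediate once the mapping $f$ has been written down, and I anticipate no substantial obstacle.
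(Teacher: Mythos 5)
Your proof is correct and follows essentially the same route as the paper: the same two-valued test mapping built from an arbitrary zero set and a clopen partition of $Y$, with the zero set recovered as $\bigcup_{k}\bigcap_{n\ge k}f_n^{-1}(Y_0)$. You merely spell out the membership $f\in{\rm K}_1(X,Y)\cap\Sigma^f(X,Y)$, which the paper dismisses as ``easy to see,'' and your verification (singleton subfamilies as sfd families witnessed by the cozero set $X$) is sound.
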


\begin{proof}
Let $U$ and $V$ be clopen disjoint nonempty subsets of $Y$ such that $Y=U\cup V$, $F\subseteq X$ be a zero set, $y_1\in U$, $y_2\in V$ and let $f:X\to Y$ be a mapping such that $f(x)=y_1$ for all $x\in F$ and $f(x)=y_2$  for all $x\in X\setminus F$. It is easy to see that $f\in {\rm K}_1(X,Y)\cap \Sigma^f(X,Y)$. Then there exists  a sequence  $(f_n)_{n=1}^\infty$ of continuous mappings $f_n:X\to Y$ such that $\lim\limits_{n\to\infty}f_n(x)=f(x)$ for every $x\in X$. Then $F=f^{-1}(U)=\bigcup\limits_{n=1}^\infty\bigcap\limits_{m=n}^\infty f_m^{-1}(U)$. Hence, $F$ is a  $C_{\sigma}$-set.
\end{proof}

\begin{thm}
  Let $Y$ be a disconnected metrizable space. If
  \begin{enumerate}
    \item[(a)] $X$ is locally compact paracompact Hausdorff space, or

    \item[(b)] $X$ is a countably compact  space,
  \end{enumerate}
  then the following conditions are equivalent:
  \begin{enumerate}
    \item ${\rm K}_1(X,Y)\cap\Sigma^f(X,Y)={\rm B}_1(X,Y)$;

    \item $X$ is a strongly zero-dimensional space.
  \end{enumerate}
\end{thm}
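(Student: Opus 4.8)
The plan is as follows. The implication $(2)\Rightarrow(1)$ needs nothing beyond what is already proved: if $X$ is strongly zero-dimensional then, $Y$ being metrizable, Theorem~\ref{thm:main1} yields ${\rm K}_1(X,Y)\cap\Sigma^f(X,Y)={\rm B}_1(X,Y)$ directly; the disconnectedness of $Y$ and the extra hypotheses (a), (b) play no role here. For $(1)\Rightarrow(2)$ I would first extract from (1) the inclusion ${\rm K}_1(X,Y)\cap\Sigma^f(X,Y)\subseteq{\rm B}_1(X,Y)$ and apply Proposition~\ref{prop:K1_in_B1_implies_ASZD} (this is where disconnectedness of $Y$ enters) to conclude that $X$ is almost strongly zero-dimensional. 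It then remains to strengthen ``almost strongly zero-dimensional'' to ``strongly zero-dimensional'' using (a) or (b).

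Under hypothesis (b) this is immediate: $X$ is countably compact, so by Proposition~\ref{thm:ASZD_equiv} the two notions coincide for $X$, and we are done. All the genuine work is in case (a).

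So suppose (a) holds. Then $X$ is normal and completely regular, and by the classical structure theorem a locally compact paracompact Hausdorff space is a topological sum $X=\bigsqcup_{\alpha}X_\alpha$ of $\sigma$-compact clopen subspaces. Since a topological sum of strongly zero-dimensional spaces is strongly zero-dimensional (which is straightforward to check directly), it suffices to show that each $X_\alpha$ is strongly zero-dimensional. Fix $\alpha$ and, using $\sigma$-compactness, write $X_\alpha=\bigcup_{n=1}^{\infty}K_n$ with each $K_n$ compact (hence closed, $X$ being Hausdorff). The crucial point is that each $K_n$ inherits almost strong zero-dimensionality from $X$: given a zero set $Z$ of the closed subspace $K_n$, extend its defining function over $X$ by the Tietze theorem, using normality of $X$; the resulting zero set $\widetilde Z$ of $X$ is a $C_\sigma$-set by hypothesis, and intersecting the clopen sets forming its $C$-constituents with $K_n$ exhibits $Z=\widetilde Z\cap K_n$ as a $C_\sigma$-set of $K_n$. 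As $K_n$ is compact, hence countably compact, Proposition~\ref{thm:ASZD_equiv} now yields that $K_n$ is strongly zero-dimensional, i.e. $\dim K_n=0$. Finally, since $X_\alpha$ is normal and equals the union of the countable family of closed sets $K_n$ with $\dim K_n=0$, the countable closed sum theorem for covering dimension (valid in every normal space) forces $\dim X_\alpha=0$; equivalently, $X_\alpha$ is strongly zero-dimensional. This settles case (a), and hence the theorem.

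The main obstacle is case (a). Its standard ingredients — the structure theorem for locally compact paracompact Hausdorff spaces, the stability of strong zero-dimensionality under topological sums, and the sum theorems for covering dimension — are routine, so the real content is the ``transfer'' step: verifying, via the Tietze extension theorem, that compact subspaces of an almost strongly zero-dimensional normal space are themselves almost strongly zero-dimensional, which is precisely what makes Proposition~\ref{thm:ASZD_equiv} applicable to the compact pieces $K_n$. One small point to keep in view throughout is that ``strongly zero-dimensional'' is defined here via completely separated sets, so when checking it for the summands $X_\alpha$ (or when transferring along the topological sum) one should work with disjoint zero sets, or with disjoint closed sets in the normal setting where the two coincide, rather than with arbitrary disjoint sets.
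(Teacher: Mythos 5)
Your proof is correct, and for the implication $(1)\Rightarrow(2)$ it coincides with the paper's up to the point where $X$ is shown to be almost strongly zero-dimensional via Proposition~\ref{prop:K1_in_B1_implies_ASZD}; case (b) is then handled identically through Proposition~\ref{thm:ASZD_equiv}, and $(2)\Rightarrow(1)$ is Theorem~\ref{thm:main1} in both versions. Where you genuinely diverge is case (a). The paper takes a two-line shortcut: it invokes Proposition~\ref{prop:aszd_is_totally_sep} to get that $X$ (being completely regular) is totally separated, and then cites Theorem 6.2.10 of Engelking, which asserts that for locally compact paracompact Hausdorff spaces total separatedness already implies strong zero-dimensionality. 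You instead unfold what is essentially the proof of that cited theorem: decompose $X$ into a topological sum of $\sigma$-compact clopen pieces, reduce to compact subspaces, and reassemble via the countable closed sum theorem for $\dim$ and the stability of strong zero-dimensionality under sums. The one genuinely new ingredient you need for this --- and it is correct --- is the transfer lemma showing that a compact (more generally, closed) subspace of a normal almost strongly zero-dimensional space is again almost strongly zero-dimensional, proved via Tietze extension of the defining function of a zero set and intersection of the witnessing clopen sets with the subspace; this lets you apply Proposition~\ref{thm:ASZD_equiv} to the compact pieces in place of the paper's Proposition~\ref{prop:aszd_is_totally_sep}. Your route is longer but more self-contained and isolates a hereditary property of almost strong zero-dimensionality that the paper does not record; the paper's route is shorter at the cost of outsourcing the dimension-theoretic work to the cited Engelking theorem.
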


\begin{proof} (1)$\Rightarrow$(2). According to Proposition~\ref{prop:K1_in_B1_implies_ASZD}, $X$ is almost strongly zero-dimensional. It follows that $X$ is strongly zero-dimensional in case (b) by Proposition~\ref{thm:ASZD_equiv}. In case (a) $X$ is completely regular and, consequently, totally separated by Proposition~\ref{prop:aszd_is_totally_sep}. It remains to apply Theorem 6.2.10 from~\cite{Eng}.

The implication (2)$\Rightarrow$(1) follows from Theorem~\ref{thm:main1}.
\end{proof}

A sequence $(f_n)_{n=1}^\infty$ of mappings $f_n:X\to Y$ is called {\it stably convergent} to a mapping $f:X\to
Y$ if for every $x\in X$ there exists a number $n_0$ such that
$f_n(x)=f(x)$ for all $n\ge n_0$. We denote this fact by $f_n{\stackrel{st}\longrightarrow}f$.

\begin{lem}\label{lemma:stable_finite_valued}
Let $X$ be an almost zero-dimensional space, $Y$ be a $T_1$-space and $f\in {\rm K}_1(X,Y)$ be a finite-valued mapping. Then there exists a sequence of continuous finite-valued mappings $f_n:X\to Y$ which is stably convergent to  $f$ on $X$.
\end{lem}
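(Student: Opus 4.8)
The plan is to write $f$ as taking finitely many values $y_1,\dots,y_m$ and let $F_j = f^{-1}(y_j)$. Since $Y$ is $T_1$, each singleton is closed, hence each $F_j = f^{-1}(\{y_j\})$ is a zero set in $X$ because $f \in {\rm K}_1(X,Y)$: indeed $X \setminus F_j = f^{-1}(Y \setminus \{y_j\})$ is functionally $F_\sigma$, so $F_j$ is functionally $G_\delta$; but also $F_j$ itself, being $f^{-1}$ of the open set $Y\setminus\{y_k : k \ne j\}$, is functionally $F_\sigma$, so $F_j$ is functionally ambiguous. Actually for the argument it is cleanest to note each $F_j$ is a zero set: being functionally ambiguous and (as a finite-valued preimage) closed, one checks $F_j$ is a zero set. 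The sets $F_1,\dots,F_m$ form a partition of $X$ into zero sets.

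Next I would invoke almost strong zero-dimensionality: each $F_j$ is a $C_\sigma$-set, so write $F_j = \bigcup_{n=1}^\infty C_{j,n}$ with each $C_{j,n}$ a $C$-set, and we may assume the sequence $(C_{j,n})_n$ is increasing in $n$ for each fixed $j$. For a fixed $n$, the sets $C_{1,n}, C_{2,n}, \dots, C_{m,n}$ are pairwise disjoint $C$-sets (they sit inside the disjoint $F_j$'s), so Corollary~\ref{cor:separated_n-C-sets} yields pairwise disjoint clopen sets $G_{1,n},\dots,G_{m,n}$ with $X = G_{1,n} \cup \dots \cup G_{m,n}$ and $C_{j,n} \subseteq G_{j,n}$ for each $j$. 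Define $f_n : X \to Y$ by $f_n(x) = y_j$ whenever $x \in G_{j,n}$. This is well-defined since the $G_{j,n}$ partition $X$, it is continuous since each $G_{j,n}$ is clopen, and it is finite-valued.

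Finally I would verify stable convergence. Fix $x \in X$. There is a unique $j$ with $x \in F_j$, hence $f(x) = y_j$, and since $F_j = \bigcup_n C_{j,n}$ there is $n_0$ with $x \in C_{j,n_0}$; by monotonicity $x \in C_{j,n} \subseteq G_{j,n}$ for all $n \ge n_0$. For such $n$, since the $G_{\cdot,n}$ are disjoint and $x \in G_{j,n}$, we get $f_n(x) = y_j = f(x)$. Thus $f_n \stackrel{st}{\longrightarrow} f$.

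I do not anticipate a serious obstacle; the only point needing a little care is confirming that the finite-valued preimage $F_j = f^{-1}(\{y_j\})$ is genuinely a zero set (not merely closed or functionally ambiguous) so that almost strong zero-dimensionality applies — this follows because $\{y_j\}$ is open in the subspace $f(X) = \{y_1,\dots,y_m\}$ of the $T_1$-space $Y$, so $F_j$ is functionally $F_\sigma$ in $X$, and simultaneously $X\setminus F_j = \bigcup_{k\ne j} F_k$ is functionally $F_\sigma$ as a finite union of such sets, making $F_j$ functionally ambiguous; a standard argument then exhibits $F_j$ as a zero set. Wrapping the $C_\sigma$-representations into increasing sequences is routine (replace $C_{j,n}$ by $C_{j,1}\cup\dots\cup C_{j,n}$, still a $C$-set).
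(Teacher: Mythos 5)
Your construction is exactly the paper's: decompose $X$ into the fibres $F_j=f^{-1}(y_j)$, write each as an increasing union of $C$-sets, separate these at each stage $n$ by a clopen partition of $X$ via Corollary~\ref{cor:separated_n-C-sets}, and read off the locally constant mappings $f_n$; the stable-convergence check is also the same. The one place your justification goes wrong is the side claim that each $F_j$ is a zero set: a fibre of a ${\rm K}_1$-mapping need not be closed (take the characteristic function of $[0,1)$ viewed as a map $\mathbb R\to\{0,1\}$), and ``functionally ambiguous'' does not in general imply ``zero set'', so the sentence ``being functionally ambiguous and (as a finite-valued preimage) closed, one checks $F_j$ is a zero set'' does not stand. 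Fortunately this claim is not needed. The construction only requires that $F_j$ be a $C_\sigma$-set, and that follows directly from what you correctly established: $F_j=f^{-1}\bigl(Y\setminus\{y_k:k\ne j\}\bigr)$ is functionally $F_\sigma$, hence a countable union of zero sets, each of which is a $C_\sigma$-set by the definition of almost strong zero-dimensionality; a countable union of $C_\sigma$-sets is again $C_\sigma$, and passing to an increasing sequence of $C$-sets works as you describe because a finite union of $C$-sets is a $C$-set (since $\bigcap_n U_n\cup\bigcap_n V_n=\bigcap_{n,m}(U_n\cup V_m)$). With that correction your argument coincides with the paper's proof.
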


\begin{proof}
Denote $f(X)=\{y_1,\dots,y_m\}$. Since for every  $1\le i\le m$ the set $A_i=f^{-1}(y_i)$  is functionally $F_\sigma$ in  $X$, there exists an increasing sequence $(C_{i,n})_{n=1}^\infty$ of $C$-subsets of $X$ such that
$A_i=\bigcup\limits_{n=1}^\infty C_{i,n}$. According to Corollary~\ref{cor:separated_n-C-sets} for every $n\in\mathbb N$ there are disjoint clopen sets $G_{1,n},\dots,G_{m,n}$ such that $C_{i,n}\subseteq G_{i,n}$ for every $i=1,\dots,m$ and $X=G_{1,n}\cup\dots\cup G_{m,n}$. Now for every  $n\ge 1$ we put $f_n(x)=y_i$ if $x\in G_{i,n}$ for some $i=1,\dots,m$. It is easy to see that $f_n{\stackrel{st}\longrightarrow}f$ on $X$.
\end{proof}

\begin{lem}\label{lemma:uniform_limit}
Let $X$  be an almost zero-dimensional space, $(Y,d)$ be a metric space and $(f_n)_{n=1}^\infty$ be a sequence of finite-valued mappings  $f_n\in {\rm K}_1(X,Y)$  which is uniformly convergent to $f:X\to Y$. Then $f\in {\rm B}_1(X,Y)$.
\end{lem}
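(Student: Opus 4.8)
The plan is to exhibit $f$ as a pointwise limit of locally constant (hence continuous) maps, built from a single decreasing sequence of finite partitions of $X$ adapted simultaneously to all the $f_n$. Concretely, I would first set $\varepsilon_n=\sup_{x\in X}d(f_n(x),f(x))$, so $\varepsilon_n\to0$, and observe that for every $n$ and every $y\in f_n(X)$ the level set $f_n^{-1}(y)$ equals $f_n^{-1}(B(y,\varrho))$ for a sufficiently small $\varrho>0$, hence is a functionally $F_\sigma$-set; since $X$ is almost strongly zero-dimensional, each zero set, and therefore each functionally $F_\sigma$-set — in particular each such level set — is a $C_\sigma$-set. Then, for $k\in\mathbb N$, I would let $\mathcal Q_k$ be the common refinement of the finite partitions $\{f_m^{-1}(y):y\in f_m(X)\}$, $m=1,\dots,k$ (with $\mathcal Q_0=\{X\}$). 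Each $\mathcal Q_k$ is a finite partition of $X$ into $C_\sigma$-sets, $\mathcal Q_{k+1}$ refines $\mathcal Q_k$, and $\operatorname{diam}f(R)\le2\varepsilon_k$ for every $R\in\mathcal Q_k$ (as $f_k$ is constant on $R$); for $j\le k$ let $\pi_j(R)$ denote the $\mathcal Q_j$-cell containing $R$, and fix once and for all a point $y_R\in f(R)$ for each cell $R$.

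Next I would choose the approximating data coherently, via two nested inductions. For every cell $R$ I would pick an increasing exhaustion $C^R_l\uparrow R$ by $C$-sets, arranged — by induction on $k$, intersecting an arbitrary such exhaustion of $R$ with the one already chosen for $\pi_{k-1}(R)$ — so that $C^R_l\subseteq C^{\pi_{k-1}(R)}_l$ for all $l$. Then, for each fixed $l$, I would construct by induction on $k=1,\dots,l$ clopen partitions $\{G^{(l)}_R:R\in\mathcal Q_k\}$ of $X$ with $C^R_l\subseteq G^{(l)}_R$ and $G^{(l)}_R\subseteq G^{(l)}_{\pi_{k-1}(R)}$: at the $k$-th step one applies Corollary~\ref{cor:separated_n-C-sets} inside each already-built clopen piece $G^{(l)}_{\widehat R}$, $\widehat R\in\mathcal Q_{k-1}$, to the pairwise disjoint $C$-sets $\{C^R_l:\pi_{k-1}(R)=\widehat R\}$, all of which lie in $G^{(l)}_{\widehat R}$ by the coherence of the exhaustions. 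Finally I would define $h_l:X\to Y$ by $h_l(x)=y_R$ where $R$ is the $\mathcal Q_l$-cell with $x\in G^{(l)}_R$; then each $h_l$ is locally constant, hence continuous.

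It remains to check that $h_l\to f$ pointwise. Fixing $x$, write $R_k(x)$ for the $\mathcal Q_k$-cell of $x$ and let $p=p(x,l)$ be the largest $k\le l$ with $x\in C^{R_k(x)}_l$ (and $p=0$ if there is none). For $k\le p$ the coherence of the exhaustions gives $x\in C^{R_k(x)}_l\subseteq G^{(l)}_{R_k(x)}$; if $R^{(l)}$ is the $\mathcal Q_l$-cell with $x\in G^{(l)}_{R^{(l)}}$, then the coherence of the clopen partitions gives $x\in G^{(l)}_{\pi_k(R^{(l)})}$, so $\pi_k(R^{(l)})=R_k(x)$ because the sets $G^{(l)}_{R'}$, $R'\in\mathcal Q_k$, are pairwise disjoint. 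Taking $k=p$ yields $R^{(l)}\subseteq\pi_p(R^{(l)})=R_p(x)$, hence $h_l(x)=y_{R^{(l)}}$ and $f(x)$ both lie in $f(R_p(x))$ and $d(h_l(x),f(x))\le2\varepsilon_{p(x,l)}$ whenever $p(x,l)\ge1$. Since $C^{R_k(x)}_l\uparrow R_k(x)\ni x$ for each fixed $k$, one has $p(x,l)\to\infty$ as $l\to\infty$, so $h_l(x)\to f(x)$; thus $f\in{\rm B}_1(X,Y)$.

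The step I expect to be the main obstacle is precisely this double coherence. It is what guarantees that a point assigned to a ``wrong'' cell $R^{(l)}$ at the finest level $l$ is nonetheless assigned to the correct cell at every level $k\le p(x,l)$, which forces $R^{(l)}\subseteq R_{p(x,l)}(x)$ and hence keeps $y_{R^{(l)}}$ within $2\varepsilon_{p(x,l)}$ of $f(x)$; with clopen approximations chosen independently at each level the value at a misclassified point could be arbitrarily far from $f(x)$, and the diagonal $(h_l)$ would fail to converge. Getting both coherences to coexist with the inclusions $C^R_l\subseteq G^{(l)}_R$ is the technical heart, and it is here that almost strong zero-dimensionality does its work (turning the level sets into the $C_\sigma$-form needed for Lemma~\ref{lemma:separated_C_sets} and Corollary~\ref{cor:separated_n-C-sets}), the construction refining, and making coherent across all levels, the one in the proof of Lemma~\ref{lemma:stable_finite_valued}.
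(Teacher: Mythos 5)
Your proof is correct, but it takes a genuinely different route from the paper's. The paper first invokes Lemma~\ref{lemma:stable_finite_valued} to replace each $f_n$ by continuous finite-valued maps $f_{n,m}$ converging stably to $f_n$, and then glues this double array together by an inductive metric correction: $h_{k+1,m}$ equals $f_{k+1,m}$ on the clopen set $U_m=\{x:d(f_{k+1,m}(x),h_{k,m}(x))\le 2^{-(k+1)}\}$ and equals $h_{k,m}$ elsewhere, which forces $d(h_{k+1,m},h_{k,m})\le 2^{-(k+1)}$ everywhere; a telescoping estimate then shows that the diagonal $h_{m,m}$ converges pointwise to $f$. You bypass the stable-convergence device and the double-indexed sequence entirely: you build the tree of common refinements $\mathcal Q_k$ of the level-set partitions of $f_1,\dots,f_k$ (finite partitions into $C_\sigma$-sets with $\operatorname{diam}f(R)\le 2\varepsilon_k$ on each cell), equip it with coherent $C$-set exhaustions and coherent clopen partitions via Corollary~\ref{cor:separated_n-C-sets} applied inside each clopen piece of the previous level, and read off locally constant approximants directly; your double coherence plays exactly the role that the sets $U_m$ play in the paper, namely controlling the value at points not yet captured at the finest level. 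Both arguments rest on Corollary~\ref{cor:separated_n-C-sets}; the paper's version is shorter because it reuses Lemma~\ref{lemma:stable_finite_valued} as a black box, while yours is more self-contained (it essentially absorbs that lemma into the construction) at the cost of heavier bookkeeping. Two cosmetic points: discard empty cells of $\mathcal Q_k$ before choosing $y_R\in f(R)$, and note that $\varepsilon_n$ need only be finite for large $n$ — neither affects the argument.
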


\begin{proof}
Without loss of generality we may assume that
\begin{gather}\label{gath:ineq1}
d(f_{n+1}(x),f_n(x))\le\frac{1}{2^{n+1}}
\end{gather}
for all $x\in X$ and $n\in\mathbb N$. By Lemma~\ref{lemma:stable_finite_valued} for every $n\in\mathbb N$ there exists a sequence $(f_{n,m})_{m=1}^\infty$ of continuous finite-valued mappings $f_{n,m}:X\to Y$ such that
\begin{gather}\label{gath:stable1}
  f_{n,m}{\stackrel{d}\longrightarrow} f_n.
\end{gather}

For all $x\in X$ and $m\in\mathbb N$ we put
\begin{gather*}
h_{0,m}(x)=h_{1,m}(x)=f_{1,m}(x).
\end{gather*}
Now assume that for some  $k\in\mathbb N$ we have already defined sequences $(h_{1,m})_{m=1}^\infty$, \dots, $(h_{k,m})_{m=1}^\infty$ of continuous finite-valued mappings such that
\begin{gather}\label{gath:stable2}
 h_{n,m}{\stackrel{d}\longrightarrow} f_n\quad \mbox{for all\,\,} n=1,\dots,k,
 \end{gather}
 \begin{gather}\label{gath:stable3}
  d(h_{n+1,m}(x),h_{n,m}(x))\le \frac{1}{2^{n+1}}\,\,\,\mbox{for all}\,\,\, x\in X, m\in\mathbb N\,\,\,\mbox{and}\,\,\,n=0,\dots,k-1.
\end{gather}
For every  $m\in\mathbb N$ let
\begin{gather*}
U_m=\bigl\{x\in X: d(f_{k+1,m}(x),h_{k,m}(x))\le\frac{1}{2^{k+1}}\bigr\}.
\end{gather*}
Then $U_m$ is clopen in $X$. Moreover, conditions~~(\ref{gath:ineq1}), (\ref{gath:stable1}) and (\ref{gath:stable2}) imply that
\begin{gather}\label{uni}
  X=\bigcup\limits_{n=1}^\infty\bigcup\limits_{m=n}^\infty U_m.
\end{gather}
Define a sequence of finite-valued continuous mappings $(h_{k+1,m})_{m=1}^\infty$ by the formula
$$
h_{k+1,m}(x)=\left\{\begin{array}{ll}
  f_{k+1,m}(x), & \,\,\mbox{if}\,\, x\in U_m, \\
  h_{k,m}(x), & \,\,\mbox{if}\,\,x\not\in U_m. \\
\end{array}
\right.
$$
Notice that~(\ref{gath:stable1}) and~(\ref{uni}) imply that $h_{k+1,m}{\stackrel{d}\longrightarrow} f_{k+1}$ on $X$.

We prove that the inequality~(\ref{gath:stable3}) holds. Fix $m\in\mathbb N$ and $x\in X$. If $x\in U_m$,  then
$h_{k+1,m}(x)=f_{k+1,m}(x)$ and $d(h_{k+1,m}(x),h_{k,m}(x))=d(f_{k+1,m}(x),h_{k,m}(x))\le\frac{1}{2^{k+1}}$. If $x\not\in U_m$, then $h_{k+1,m}(x)=h_{k,m}(x)$ i $d(h_{k+1,m}(x),h_{k,m}(x))=0$.

Finally, we show that $\lim\limits_{m\to\infty}h_{m,m}(x)=f(x)$. Let $x\in X$, $\varepsilon>0$ and $n_0$  be a number such that
$$
\frac{1}{2^{n_0}}<\frac{\varepsilon}{2}\quad\mbox {and}\quad
d(f_{n_0}(x),f(x))<\frac{\varepsilon}{2}.
$$
Take $m_0>n_0$ with $h_{n_0,m}(x)=f_{n_0}(x)$ for all $m\ge m_0$. Then
\begin{gather*}
d(h_{m,m}(x),f(x))\le \\\le \sum\limits_{i=n_0+1}^m d(h_{i-1,m}(x),h_{i,m}(x))+d(h_{n_0,m}(x),f_{n_0}(x))+d(f_{n_0}(x),f(x))<\\
<\sum\limits_{i=n_0+1}^m \frac{1}{2^i}+\frac{\varepsilon}{2}<\frac{1}{2^{n_0}}+\frac{\varepsilon}{2}<\varepsilon
\end{gather*}
for all  $m\ge m_0$. Hence, $f\in{\rm B}_1(X,Y)$.
\end{proof}

\begin{thm}\label{thm:ASZD_implies_K1_in_B1}
 Let $X$ be an almost zero-dimensional space and $Y$ be a metrizable separable space. Then ${\rm K}_1(X,Y)={\rm B}_1(X,Y)$.
\end{thm}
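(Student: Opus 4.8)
The plan is to establish the two inclusions separately. The inclusion ${\rm B}_1(X,Y)\subseteq {\rm K}_1(X,Y)$ needs no hypothesis on $X$: by Proposition~\ref{BaireOne_is_SFD} every $f\in {\rm B}_1(X,Y)$ lies in $\Sigma^f_0(X,Y)$, so it has a $\sigma$-sfd base $\mathcal B=\bigcup_n\mathcal B_n$ of zero sets; for any open $V\subseteq Y$ one has $f^{-1}(V)=\bigcup_n\bigcup\{B\in\mathcal B_n:B\subseteq f^{-1}(V)\}$, each inner union being a union of an sfd-family of zero sets and hence a zero set by Corollary~\ref{cor:unionsfd}, so $f^{-1}(V)$ is functionally $F_\sigma$. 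For the reverse inclusion I reduce to Lemma~\ref{lemma:uniform_limit}: it suffices to show that every $f\in {\rm K}_1(X,Y)$ is a uniform limit of a sequence of finite-valued mappings from ${\rm K}_1(X,Y)$.

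So, given $f\in {\rm K}_1(X,Y)$, I use that $Y$ is separable metrizable to fix a compatible totally bounded metric $d$ on $Y$ (pulled back from an embedding into the Hilbert cube). Fix $n\in\mathbb N$, cover $Y$ by finitely many open sets $V_1,\dots,V_p$ of $d$-diameter less than $\frac{1}{2^n}$, and pick $y_i\in V_i$; then the sets $A_i=f^{-1}(V_i)$ are functionally $F_\sigma$ and cover $X$. The task becomes to refine this finite cover to a finite partition of $X$ into functionally $F_\sigma$ sets $E_i\subseteq A_i$: letting $g_n$ be the constant $y_i$ on $E_i$ then yields a finite-valued $g_n\in {\rm K}_1(X,Y)$ with $d(g_n(x),f(x))\le{\rm diam}(V_i)<\frac{1}{2^n}$ on $E_i$, and Lemma~\ref{lemma:uniform_limit} applied to $(g_n)_{n=1}^\infty$ finishes the proof.

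The disjointification is the step I expect to be the main obstacle, since for a general $X$ the naive piece $A_i\setminus\bigcup_{j<i}A_j$ is only functionally $G_\delta$. This is exactly where almost strong zero-dimensionality enters, through two elementary observations: (i) every $C$-set is a zero set, because $C=\bigcap_m U_m$ with $U_m$ clopen equals $h^{-1}(0)$ for the continuous function $h=\sum_m 2^{-m}\chi_{X\setminus U_m}$; and (ii) a finite union of $C$-sets is again a $C$-set, because $\bigcup_k\bigcap_l U_{k,l}=\bigcap_{(l_k)}\bigcup_k U_{k,l_k}$ is a countable intersection of clopen sets. By (i) together with the hypothesis on $X$, the functionally $F_\sigma$-subsets of $X$ are precisely the $C_\sigma$-sets, so I may write $A_i=\bigcup_{m\ge1}C_{i,m}$ with every $C_{i,m}$ a $C$-set. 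I would then order the pairs $(i,m)$, $1\le i\le p$, $m\in\mathbb N$, in order type $\omega$ (those with smaller $m$ first, ties broken by $i$) and set $E_{i,m}=C_{i,m}\setminus\bigcup\{C_{i',m'}:(i',m')\text{ precedes }(i,m)\}$; the subtracted set is a finite union of $C$-sets, hence by (ii) a $C$-set, hence by (i) a zero set, so each $E_{i,m}$ is a difference of two zero sets and therefore functionally $F_\sigma$. The family $(E_{i,m})$ is pairwise disjoint, covers $X$ because $(C_{i,m})$ does, and $E_{i,m}\subseteq A_i$; grouping by the first index, the mapping $g_n$ equal to the constant $y_i$ on $\bigcup_m E_{i,m}$ is finite-valued with $g_n^{-1}(y_i)=\bigcup_m E_{i,m}$ functionally $F_\sigma$, so $g_n\in {\rm K}_1(X,Y)$, and $d(g_n(x),f(x))<\frac{1}{2^n}$ for every $x\in X$. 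Thus $g_n\to f$ uniformly and $f\in {\rm B}_1(X,Y)$ by Lemma~\ref{lemma:uniform_limit}.
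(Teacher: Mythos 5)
Your proof is correct and follows essentially the same route as the paper: fix a totally bounded compatible metric on the separable metrizable $Y$, pull back a finite cover by sets of diameter less than a prescribed bound to a finite cover of $X$ by functionally $F_\sigma$-sets, disjointify it into a partition, obtain finite-valued ${\rm K}_1$-mappings converging uniformly to $f$, and invoke Lemma~\ref{lemma:uniform_limit}. The only real divergence is the disjointification step: the paper appeals to the Reduction Theorem from Kuratowski to produce disjoint functionally ambiguous sets $F_{i,n}\subseteq A_{i,n}$ (which needs no hypothesis on $X$ beyond the sets being functionally $F_\sigma$), whereas you use almost strong zero-dimensionality at this point to decompose each $A_i$ into $C$-sets and perform an explicit $\omega$-ordered disjointification, using that $C$-sets are zero sets and that finite unions of $C$-sets are $C$-sets. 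Both observations you make are correct, and your version has the mild advantage of being self-contained (and of yielding pieces that are themselves $C_\sigma$, though you do not need that); the paper's version isolates the use of the hypothesis on $X$ entirely inside Lemmas~\ref{lemma:stable_finite_valued} and~\ref{lemma:uniform_limit}. Your verification of ${\rm B}_1\subseteq{\rm K}_1$ via Proposition~\ref{BaireOne_is_SFD} and Corollary~\ref{cor:unionsfd} is also fine, though the paper gets it directly from the displayed identity~(\ref{gath:B1isK1}).
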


\begin{proof}
 The inclusion ${\rm B}_1(X,Y)\subseteq {\rm K}_1(X,Y)$ follows from the equality~(\ref{gath:B1isK1}).

Let $f\in {\rm K}_1(X,Y)$ and  $d$ be a metric on $Y$ such that $(Y,d)$ is completely bounded. For every $n\in{\mathbb N}$ we take a finite  $\frac 1n$-network $Y_n=\{y_{i,n}:i\in I_n\}$ in $Y$ and put $A_{i,n}=\{x\in X: d(f(x),y_{i,n})<\frac{1}{n}\}$ for  $n\in{\mathbb N}$ та $i\in I_n.$ Notice that for every $n$ the family  $(A_{i,n}:i\in I_n)$ is a covering of $X$ by functionally  $F_\sigma$-sets. Similarly as in the proof  of the Reduction Theorem~\cite[p.~350]{Kuratowski:Top:1} we take a sequence of disjoint functionally ambiguous sets $F_{i,n}$ in $X$ such that
$F_{i,n}\subseteq A_{i,n}$ and $\bigcup\limits_{i\in I_n}F_{i,n}=X$. For every $n\in\mathbb N$ we put $f_n(x)= y_{i,n}$ if $x\in F_{i,n}$ for some $i\in I_n$. Then $(f_n)_{n=1}^\infty$ is a sequence of finite-valued mappings $f_n\in {\rm K}_1(X,Y)$ which is uniformly convergent to $f$ on $X$. It remains to apply Lemma~\ref{lemma:stable_finite_valued}.
\end{proof}

Combining Proposition~\ref{prop:K1_in_B1_implies_ASZD} and Theorem~\ref{thm:ASZD_implies_K1_in_B1} we obtain the following result.
\begin{thm} If $X$ is a topological space and $Y$ is a disconnected metrizable separable space, then the following conditions are equivalent:
\begin{enumerate}
  \item $X$ is almost strongly zero-dimensional;

  \item ${\rm K}_1(X,Y)={\rm B}_1(X,Y)$.
\end{enumerate}
\end{thm}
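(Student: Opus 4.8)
The plan is to prove the two implications separately, each by quoting one of the two results established just above; the statement is a packaging of Proposition~\ref{prop:K1_in_B1_implies_ASZD} and Theorem~\ref{thm:ASZD_implies_K1_in_B1}, so the real work is bookkeeping which property of $Y$ powers which direction.

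For $(1)\Rightarrow(2)$ I would argue as follows. Assume $X$ is almost strongly zero-dimensional. Since $Y$ is metrizable separable, Theorem~\ref{thm:ASZD_implies_K1_in_B1} applies verbatim and gives ${\rm K}_1(X,Y)={\rm B}_1(X,Y)$; disconnectedness of $Y$ plays no role in this direction. (Recall that in that theorem the easy inclusion ${\rm B}_1(X,Y)\subseteq{\rm K}_1(X,Y)$ comes from~(\ref{gath:B1isK1}), while the reverse inclusion is obtained by approximating an arbitrary $f\in{\rm K}_1(X,Y)$ uniformly by finite-valued ${\rm K}_1$-maps via a Reduction-Theorem argument and then invoking Lemmas~\ref{lemma:stable_finite_valued} and~\ref{lemma:uniform_limit}.)

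For $(2)\Rightarrow(1)$ I would observe that the equality ${\rm K}_1(X,Y)={\rm B}_1(X,Y)$ trivially entails ${\rm K}_1(X,Y)\cap\Sigma^f(X,Y)\subseteq{\rm K}_1(X,Y)={\rm B}_1(X,Y)$, which is exactly the hypothesis of Proposition~\ref{prop:K1_in_B1_implies_ASZD}. Since $Y$ is disconnected, that proposition yields that $X$ is almost strongly zero-dimensional. This is the point where disconnectedness of $Y$ is essential: one fixes a nontrivial clopen partition $Y=U\cup V$ in order to encode a given zero set $F\subseteq X$ as $f^{-1}(U)$ for a suitable two-valued map $f\in{\rm K}_1(X,Y)\cap\Sigma^f(X,Y)$, and then reads off from a sequence of continuous maps converging pointwise to $f$ that $F=\bigcup_n\bigcap_{m\ge n}f_m^{-1}(U)$ is a $C_\sigma$-set.

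There is essentially no obstacle here, since the substantive arguments are already contained in the two cited statements. The only thing requiring attention is matching hypotheses: disconnectedness of $Y$ is what makes the forward implication go through, whereas metrizable separability of $Y$ is what the reverse implication needs; as $Y$ is assumed to be disconnected, metrizable and separable, both cited results are available and combine to give the equivalence.
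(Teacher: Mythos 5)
Your proof is correct and follows exactly the paper's route: the paper obtains this theorem by combining Proposition~\ref{prop:K1_in_B1_implies_ASZD} (for $(2)\Rightarrow(1)$, using disconnectedness of $Y$) with Theorem~\ref{thm:ASZD_implies_K1_in_B1} (for $(1)\Rightarrow(2)$, using metrizable separability of $Y$). Your bookkeeping of which hypothesis on $Y$ powers which implication, and the observation that the equality trivially gives the intersection inclusion needed for the proposition, is precisely what the paper's one-line proof leaves implicit.
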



\normalsize
\baselineskip=17pt


\end{document}